\documentclass[11pt]{article}

\usepackage[text={6.5in,9in},centering]{geometry}

\usepackage{amsmath,amssymb,amsfonts,amsthm}
\usepackage{bm}
\usepackage{mathrsfs}
\usepackage[skip=16pt]{caption}   
\usepackage{graphicx}
\graphicspath{{./}{../pdf/}{../jpeg/}}
\DeclareGraphicsExtensions{.pdf,.jpeg,.png}

\usepackage{array,multirow,booktabs,setspace}
\usepackage{tabularx}
\usepackage{lipsum}         
\usepackage{eso-pic}        
\usepackage{tikz}           
\usetikzlibrary{calc}       

\AddToShipoutPictureBG{%
	\begin{tikzpicture}[remember picture, overlay]
		\foreach \offset in {-10cm, 0cm, 10cm} {
			\node[rotate=45, scale=4, opacity=0.2, text=gray] 
			at ($(current page.center) + (0cm, \offset)$) 
			{PASCAL-HESSIAN CRITERION};
		}
	\end{tikzpicture}%
}
\usepackage{xcolor,soul,framed}
\usepackage{enumitem}
\usepackage{url}
\usepackage{algorithm,algorithmic}
\usepackage{cite}
\usepackage{hyperref}

\allowdisplaybreaks[4]

\newtheorem{theorem}{Theorem}
\newtheorem{lemma}{Lemma}

\newtheorem{definition}{Definition}

\newtheorem{remark}{Remark}

\newenvironment{keywords}
  {\small\noindent\textbf{Keywords: }}
  {\par\medskip}

\DeclareMathOperator{\spanop}{span}

\DeclareMathOperator{\col}{col}

\begin{document}

\title{\LARGE\bfseries Simple Verification and Implementation of Observer Error Dynamics Linearization: A Pascal's Triangle--Hessian Matrix Criterion}

\author{Haotian~Xu, Xinquan~Shao, Shuai~Liu,~	
\thanks{This work is supported by National Natural Science Foundation of China (No.61533013, 61633019, 61903290), Funded by China Postdoctoral Science Foundation (No.2022M721932). (Corresponding author: Shuai Liu)}
\thanks{Haotian Xu is with the School of Artificial Intelligence, Shandong University, Jinan 250061, China; and also with Key Laboratory of Machine Intelligence and System Control, Ministry of Education, China~(e-mail:  \href{mailto:xuhaotian\_1993@126.com}{xuhaotian\_1993@126.com}).}
\thanks{Xinquan Shao and Shuai Liu are with the School of Control Science and Engineering, Shandong University, Jinan 250061, China; and also with Key Laboratory of Machine Intelligence and System Control, Ministry of Education, China~(e-mail: \href{mailto:liushuai@sdu.edu.cn}{liushuai@sdu.edu.cn}).}}

\date{}
\maketitle
\vspace{-6pt}
\begin{center}
	\rule{0.8\textwidth}{0.4pt}
\end{center}
\vspace{6pt}

\begin{abstract}
The classical theory of nonlinear observer error linearization---the nonlinear observer canonical form---has attracted sustained attention since its inception in the 1980s. Under the existing theoretical framework, verifying the necessary and sufficient condition for a nonlinear system to achieve observer error linearization requires extensive Lie bracket involutivity checks, while constructing the output-dependent univariate functions in the canonical form demands solving systems of partial differential equations and performing nonlinear coordinate transformations. Neither verification nor construction admits a systematic, efficient implementation, severely limiting the applicability of the theory to high-dimensional systems. To address this issue, inspired by the definition of high-order fully measured systems, this paper proposes the Pascal-Hessian condition for single-output systems. This condition equivalently converts the necessary and sufficient condition for observer error linearization into a structural test on the Hessian matrix of the nonlinear term in high-order fully measured systems: the coefficients in the upper-left corner of Hessian matrix form a Pascal's triangle, while the lower-right corner vanishes identically. Simultaneously, we provide explicit integral formulas for all output-dependent univariate functions in the canonical form, eliminating the need to solve partial differential equations. Compared with the existing theory, our method reduces the computational complexity of condition verification from $O(n^4)$ to $O(n^2)$, and replaces the intricate process of solving partial differential equations with explicit indefinite integral for canonical form construction. We further extend the result to multi-output systems with equal observability indices. This work reveals the intrinsic mathematical connection between the nonlinear observer canonical form and Pascal's triangle, providing a completely formulaic and directly operable pathway to observer error linearization. Numerical examples validate the effectiveness of the proposed method.
\end{abstract}

\begin{keywords}
Nonlinear observer canonical form, Error dynamics linearization, Fully measured systems, Pascal's triangle, Hessian matrix.
\end{keywords}

\section{Introduction}\label{sec1}
\subsection{Problem Background}\label{sec1.1}
Consider the nonlinear system
\begin{align}
	&\dot{\chi}=f(\chi),\label{sys11}\\
	&y=h(\chi),\label{sys12}
\end{align}
where $\chi=\col\{\chi_1,\ldots,\chi_n\}\in\mathbb{R}^n$ and $y\in\mathbb{R}$ are the system state and measurement output (notations for $\mathbb{R}$, $\mathbb{R}^n$, $\col\{\cdot\}$, etc.\ are explained at the beginning of Section~\ref{sec2}), $f(\chi)=\col\{f_1(\chi),\ldots,f_n(\chi)\}$ is an $n$-dimensional smooth nonlinear vector-valued function, and $h(\chi)$ is a smooth function. When $f(\chi)$ and $h(\chi)$ satisfy the necessary and sufficient conditions for the nonlinear observer canonical form (which will be detailed in Section~\ref{sec2}) , system~(\ref{sys11})--(\ref{sys12}) can be transformed via a nonlinear transformation $z=\Phi(\chi)$ into the following observer canonical form (OCF)~\cite{Isidori1995}:
\begin{align}
	&\begin{bmatrix}
		\dot{z}_1\\\dot{z}_2\\\vdots\\\dot{z}_n
	\end{bmatrix}=\begin{bmatrix}0&\cdots&0&0\\
								 1&\cdots&0&0\\
								 \vdots&\ddots&\vdots&\vdots\\
								 0&\cdots&1&0
     \end{bmatrix}\begin{bmatrix}
     z_1\\z_2\\\vdots\\z_n
\end{bmatrix}+\begin{bmatrix}
\alpha_1(y)\\\alpha_2(y)\\\vdots\\\alpha_n(y)
\end{bmatrix},\\
&y=z_n,
\end{align}
where, for any $i=1,\ldots,n$, $\alpha_i(y)$ is a smooth univariate function. Let $z=\col\{z_1,\ldots,z_n\}$, $\alpha(y)=\col\{\alpha_1(y),\ldots,\alpha_n(y)\}$, and
\begin{align}
	A=\begin{bmatrix}0&\cdots&0&0\\
		1&\cdots&0&0\\
		\vdots&\ddots&\vdots&\vdots\\
		0&\cdots&1&0
		\end{bmatrix}, C=\begin{bmatrix}0&\cdots&0&1\end{bmatrix}.
\end{align}
We obtain the compact form
\begin{align}
	\dot{z}=Az+\alpha(y),~y=Cz.\label{sys2}
\end{align}
Based on system~(\ref{sys2}), one can readily design an observer gain matrix $L$ such that the observer
\begin{align}
	\dot{\hat{z}}=A\hat{z}+L(y-C\hat{z})+\alpha(y)
\end{align}
has linear error dynamics. Since its inception, the above knowledge has been one of the powerful tools in the field of nonlinear observer design.

In 2020, Duan et al established the high-order fully measured system (FMS) theory based on the observation that most practical physical systems can be described in the high-order form $x_1^{(n)}=p(x)$, where $x=\operatorname{col}\{x_1,\ldots,x_n\}\in\mathbb{R}^n$ is the state vector, the superscript $(n)$ denotes the $n$-th order time derivative, and $p(x)$ is a nonlinear function \cite{Duan2020III,Duan2024Overview}. Specifically, in \cite{Duan2020III}, Duan derived from the OCF the standard form for a high-order system to achieve observer error linearization---the high-order fully measured form: for the high-order system
\begin{align}
	x_1^{(n)}=p(x),~y=x_1\label{sys3}
\end{align}
if there exists a family of smooth univariate functions $B_r(\cdot)$, $r=0,\ldots,n-1$, such that $p(x)$ takes the form
\begin{align}
	p(x)=\sum_{r=0}^{n-1}B_r^{(r)}(x_1),\label{fms-p}
\end{align}
then the nonlinear system~(\ref{sys3}) is called a high-order fully measured system, and the nonlinear function $\alpha(y)$ in the observer canonical form can be directly obtained from the high-order fully measured form as $\alpha_i(y)=B_{i-1}(y)$. For practical systems that can be expressed in high-order form, the FMS theory is considerably simpler than the traditional OCF approach for obtaining a nonlinear observer with linear error dynamics (as will be introduced in Appendix~\ref{app1}, the traditional method for solving OCF requires extensive Lie derivative and Lie bracket computations, followed by solving partial differential equations (PDEs) and performing nonlinear transformations on the original system).

The central issue, however, is that while many practical systems can be expressed in the high-order form (\ref{sys3}), one must still determine whether $p(x)$ satisfies the FMS condition (\ref{fms-p}) and how to obtain the corresponding $B_r(y)$ in~(\ref{fms-p}). Unfortunately, neither the original FMS theory~\cite{Duan2020III} nor the subsequent works~\cite{Zhang2024Stabilization,Zhao2023Observer} provide a solution to this problem.

Thus, to this day, verifying whether~(\ref{fms-p}) holds still requires relying on the OCF verification method (checking $n(n-1)/2$ Lie bracket conditions); and solving for $B_r(y)$ can only be done through the OCF computation (i.e., solving PDEs and performing nonlinear transformations, among other steps). Clearly, this significantly undermines the practical utility of the FMS theory for verification and construction purposes---it was originally proposed to circumvent the cumbersome verification and solution procedures of the OCF theory, yet the lack of an effective quantifiable verification method for condition~(\ref{fms-p}) forces one to continue using the full OCF verification and solution machinery.

It is therefore evident that establishing a quantifiable necessary and sufficient condition for~(\ref{fms-p}) based on the high-order system form~(\ref{sys3}), and proposing a method to directly obtain $B_r(y)$ from $p(x)$, is of great importance. Moreover, and more significantly, if a verification and solution method based on $p(x)$ can be established, then general nonlinear systems~(\ref{sys11})--(\ref{sys12}) can also avoid the cumbersome verification and solution methods of the OCF theory, because a simple nonlinear transformation can readily convert~(\ref{sys11})--(\ref{sys12}) into the high-order form~(\ref{sys3}) (this transformation will be introduced in Section~\ref{sec2}). Hence, if one could directly verify, based on $p(x)$, whether a system is FMS and directly extract the $B_r(y)$ therein, it would be possible to dramatically simplify the difficulty of verifying and implementing observer error linearization under the existing framework.

This paper will solve this problem. Perhaps surprisingly, the simple method behind this classical theory (observer error linearization) dating back to the 1980s lies hidden in Pascal's triangle, which was born as early as 800 years ago (Pascal's triangle, also known as the Yang Hui triangle, was first recorded in the Chinese treatise \emph{Nine Chapters on the Mathematical Art} in 1261~AD)! The main body of this paper will reveal the fascinating connection among observer error linearization, FMS theory, and Pascal's triangle.

\subsection{Related Works}

State estimation for nonlinear systems is a central problem in control theory. Over the past half-century, nonlinear observer research has developed along multiple parallel tracks: the extended Kalman filter achieves recursive estimation via local linearization~\cite{gelb1974,10264154}; high-gain observers~\cite{khalil2017,Li2025PrescribedTime,Farza2025SATFHGO,10497867} and sliding mode observers~\cite{slotine1987,spurgeon2008,edwards1998,9536496} suppress nonlinearities and uncertainties through large-magnitude gains and discontinuous switching terms, respectively; moving horizon estimation formulates state estimation as a finite-horizon constrained optimization problem~\cite{rao2003,10453955,Xie2024BackAndForth}; in recent years, data-driven and learning-based observer methods have provided new pathways for systems that are difficult to model accurately~\cite{tang2024,Marani2025DeepLearningKKL,Zhou2024HealthStatus}. Among these methods, observer error linearization occupies a distinct position: it not only establishes the observability rank condition for nonlinear systems but also constructs OCF for autonomous and affine nonlinear systems, endowing nonlinear systems with completeness in observability theory comparable to that of linear systems.

The mathematical foundation of observer error linearization framework was laid by Hermann and Krener~\cite{Hermann1977}, who established the observability rank condition. Krener and Isidori~\cite{Krener1983} first gave the necessary and sufficient geometric condition for a single-output autonomous system to be transformable into the OCF in 1983; subsequently, Krener and Respondek~\cite{Krener1985} extended this theory to multi-output cases. In 1989, Xia and Gao~\cite{Xia1989} further refined the necessary and sufficient conditions for the multi-output cases. This work became the standard framework for subsequent works in this direction \cite{Hou1999,Phelps1992}. Boutat et al.~\cite{Boutat2006CDC} first proposed OCF concept and its constructive algorithm of the multi-output affine nonlinear system, and subsequently further provided the necessary and sufficient conditions together with an explicit algorithm for transforming affine nonlinear systems into the OCF~\cite{Boutat2009Automatica}. For systems that do not satisfy the OCF conditions, works such as~\cite{Boutat2011IJC,Tami2013Automatica,Boutat2015} have proposed extended nonlinear observer canonical forms (EOCF) based on dynamic extension methods.

In the partial observability setting, Boutat et al.~\cite{Boutat2016Automatica} systematically studied the partial observer canonical form (POCF) problem, and Saadi et al.~\cite{Saadi2020TAC} subsequently extended the constructive algorithm for POCF to multi-output systems. For discrete-time systems, Lee~\cite{Lee2011SCL} pioneered the conversion of Lie bracket involutivity tests into directly computable algebraic rank conditions, later extending the result to multi-output configurations~\cite{Lee2017TAC}; related work has also been generalized to discrete-time generalized OCF~\cite{Hong2008SCL} and reduced-order observer canonical forms~\cite{Boutat2012SCL}.

Although the OCF theory for nonlinear systems has by now been developed to a relatively complete state, the traditional verification and construction procedures remain extremely cumbersome, to the extent that full computer implementation is challenging. This sets a high theoretical barrier for practical application. Consequently, recent research focus has gradually shifted from formulating purely geometric conditions toward verifiability and computational simplification. Lee \cite{Lee2017} computed the verifiable Lie bracket conditions for multioutput systems and also worked on converting geometric conditions into algebraically more verifiable forms \cite{Lee2011SCL,Lee2017TAC}; Xu and Wang~\cite{Xu2022IJCAS} proposed a two-step method for computing partial observer canonical forms. The monograph by Boutat and Zheng~\cite{Boutat2021Book} systematically summarizes the theoretical achievements up to 2021. These advances have collectively pushed observer error linearization from a geometrically elegant but computationally cumbersome framework toward a design tool with clearer conditions and stronger verifiability.

Nevertheless, the various simplification methods mentioned above still rely fundamentally on verifying a large number of Lie bracket conditions, and thus do not alleviate the inherent difficulty of the necessary and sufficient condition verification. To overcome this limitation, some scholars have sought breakthroughs from the level of system structure. In 2024, Xu and Liu et al.~\cite{Xu2025Distributed}, inspired by the definition of high-order fully measured systems, proposed a class of nonlinear systems directly transformable into FMS form. Liu et al.~\cite{Liu2025Nonlinear} extended this idea to more general multi-input multi-output settings. Unlike~\cite{Xu2025Distributed}, which restricts nonlinear terms to combinations of derivatives of the output function,~\cite{Liu2025Nonlinear} allows quadratic terms and cross-subsystem couplings. Through integral-type coordinate transformations that eliminate undesired coupling terms and the coupled auxiliary dynamics method~\cite{Wang2020} that absorbs residual coefficients, the original system is ultimately converted into the extended nonlinear observable canonical form (EOCF).

While~\cite{Xu2025Distributed,Liu2025Nonlinear} have established ingenious methods for specific system structures that circumvent the cumbersome Lie bracket verification and PDE solving of the OCF, these approaches do not generalize to arbitrary nonlinear systems. This paper continues the line of research initiated by~\cite{Xu2025Distributed}, seeking, for general nonlinear systems, an alternative to the traditional OCF necessary and sufficient conditions. By establishing the connection among FMS, Pascal's triangle, and observer error linearization, this paper provides a systematic simplification pathway for OCF verification and construction. The main contributions are as follows.

\begin{enumerate}[label=\arabic*)]
\item For the general high-order system~(\ref{sys3}), we establish a directly verifiable necessary and sufficient condition based on the function $p(x)$, termed the Pascal-Hessian condition. It states that system~(\ref{sys3}) admits the high-order fully measured form if and only if the lower-right corner of the Hessian matrix of $p(x)$ vanishes identically, while the coefficients in the upper-left corner form a Pascal triangle.

\item The Pascal-Hessian condition is fully equivalent to the traditional OCF verification condition. In algorithmic implementations, checking this condition requires only $O(n^2)$ operations, compared with $O(n^4)$ for the classical necessary and sufficient condition, where $n$ is the system dimension.

\item Based on $p(x)$, we propose a formulaic construction method for the functions $B_r(y)$: all such functions are obtained directly via explicit integral formulas, requiring neither PDE solving nor complex nonlinear transformations~\cite{Isidori1995,Boutat2021Book}. Unlike the works~\cite{Xu2025Distributed,Liu2025Nonlinear}, which also avoid Lie brackets and PDEs but are confined to specific system structures, our result holds for general nonlinear systems.

\item The above results extend directly to general systems~(\ref{sys11})--(\ref{sys12}) and to multi-output systems with equal observability indices.
\end{enumerate}

The remainder of this paper is organized as follows. First, Section~\ref{sec2} provides preliminaries and the problem description. Section~\ref{sec3} presents the Pascal's Triangle--Hessian matrix criterion method for single-output systems to achieve observer error linearization, together with a detailed comparison with the traditional OCF theory. Section~\ref{sec4} describes the Pascal-Hessian condition for multi-output systems with equal observability indices. Section~\ref{sec:example} demonstrates the effectiveness of the proposed method through three examples. Finally, Section~\ref{sec6} summarizes the paper and discusses future work.

\section{Preliminaries and Problem Description}\label{sec2}

This paper adopts the following notational conventions. Let $\mathbb{R}^n$ denote the $n$-dimensional real vector space, with $I_n$ the $n\times n$ identity matrix and $A^{\top}$ the transpose of $A$. We write the ceiling function, the Euclidean inner product, and the Kronecker symbol as $\lceil\cdot\rceil$, $\langle\cdot,\cdot\rangle$, and $\delta_{i,j}$ (with $\delta_{i,j}=1$ if $i=j$, and $0$ otherwise), respectively. The binomial coefficient is $\binom{n}{k} = \frac{n!}{k!(n-k)!}$. The operator $\operatorname{col}\{\cdot\}$ stacks its arguments into a column vector, whereas $\operatorname{span}\{\cdot\}$ spans the distribution or codistribution of its entries. Finally, $x^{(n)}$ denotes the $n$-th order time derivative of $x$.

\subsection{Differential Geometry}
\begin{definition}
	 Given an $n$-dimensional smooth manifold $\mathcal{M}$, for any point $\chi$ on $\mathcal{M}$ and a neighborhood $\mathcal{U}$ containing $\chi$, if there exist smooth functions $f_1,\ldots,f_n$ defined on $\mathcal{U}$ satisfying:
		\begin{equation}\label{eq:ch2-vector field}
			f(\chi)=\sum_{i=1}^nf_i(\chi)\frac{\partial}{\partial \chi_i}, ~~\forall \chi\in\mathcal{U},
		\end{equation}
		then $f$ is called a smooth vector field.
\end{definition}
In this definition, $\frac{\partial}{\partial \chi_i},~i=1,\ldots,n$ are the basis vectors in the $\chi$ coordinates, and the functions $f_i(\chi),~i=1,\ldots,N$ are the coefficients of the vector field in the $\chi$ coordinates. Therefore, the vector field $f(\chi)$ can also be written in vector form depending only on the coefficients, namely $f(\chi)=\col\{f_1(\chi),\ldots,f_n(\chi)\}$. Based on this definition, the smooth vector-valued function $f(\chi)$ in the nonlinear system~(\ref{sys11}) is generally called a smooth vector field. For an output function $h(\chi)$, we call $L_fh(\chi)=\frac{\partial h(\chi)}{\partial \chi^\top}f(\chi)$ the Lie derivative of the function $h$ along the direction of the vector field $f$. For any positive integer $k\geq 2$, higher-order Lie derivatives are defined as $L_f^kh(\chi)=L_f(L_f^{k-1}h(\chi))$. For two vector fields $f(\chi)$ and $g(\chi)$, the Lie bracket $[f,g]$ is defined as 
\begin{align}\label{Lie-bracket}
	[f,g]=fg-gf=\sum_{i=1}^n\left(\frac{\partial g}{\partial \chi^\top}f-\frac{\partial f}{\partial \chi^\top}g\right)_i\frac{\partial}{\partial \chi_i}.
\end{align}

Corresponding to vector fields are covector fields. A covector field ${\rm d}h(\chi)$ in a neighborhood $\mathcal{U}$ of a point $\chi$ can be expressed as
\begin{align}
	{\rm d}h(\chi)=\begin{bmatrix}
		\frac{\partial h(\chi)}{\partial \chi_1}&\cdots&\frac{\partial h(\chi)}{\partial \chi_n}
	\end{bmatrix}\col\{{\rm d}\chi_1,\ldots,{\rm d}\chi_n\},~\forall \chi\in\mathcal{U},
\end{align}
where $\frac{\partial h(\chi)}{\partial \chi_1}, \ldots, \frac{\partial h(\chi)}{\partial \chi_n}$ are the coefficients of the covector field ${\rm d}h(\chi)$ with respect to the basis ${\rm d}\chi_1,\ldots,{\rm d}\chi_n$. For the $f(\chi)$ and $h(\chi)$ in the nonlinear system~(\ref{sys11})--(\ref{sys12}), we call
\begin{align}
	\Delta^\perp=\spanop\{{\rm d}h(\chi),~{\rm d}L_fh(\chi),\ldots,{\rm d}L_f^{n-1}h(\chi)\}
\end{align}
a codistribution. The dimension of the codistribution $\Delta$ is defined as the rank of the matrix formed by the coefficient vectors of the $n$ covector fields (${\rm d}h(\chi)$, ${\rm d}L_fh(\chi)$, $\ldots$, ${\rm d}L_f^{n-1}h(\chi)$) that span the codistribution.

If $\Delta^\perp$ has dimension $n$ in the region $\mathcal{U}$, then the system~(\ref{sys11})--(\ref{sys12}) is said to be completely observable in the region $\mathcal{U}$.

\subsection{Two Diffeomorphism Transformations for Nonlinear Observers}

In Section~\ref{sec1}, we have already presented the concrete form of the observer canonical form~(\ref{sys2}). Below, we introduce, via a lemma, an important result from literature \cite{Isidori1995} concerning the observer canonical form and observer error dynamics linearization.
\begin{lemma}\label{lem:ocf}
	In the region $\mathcal{U}$, there exists a diffeomorphism (a map is a diffeomorphism if and only if its inverse map exists and both the map and its inverse are smooth) $z=\Phi(\chi)$ such that the system~(\ref{sys11})--(\ref{sys12}) can be transformed into the OCF~(\ref{sys2}) if and only if: 1) the codistribution $\Delta^\perp$ has dimension $n$ in $\mathcal{U}$; 2) for any $i,j=1,\ldots,n$, $[\tau_i,\tau_j]=0$, where the vector fields $\tau_i$ are the solutions of the system of linear equations
	\begin{align}
		\frac{\partial L_f^rh(\chi)}{\partial \chi^\top}\cdot\tau_1=\delta_{r,n-1},~r=0,\ldots,n-1\label{ocf}
	\end{align}
	and $\tau_k=[\tau_{k-1},f]$.
\end{lemma}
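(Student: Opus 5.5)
The plan is to prove both directions through the frame $\tau_1,\ldots,\tau_n$ of Lemma~\ref{lem:ocf}, using the observability coordinates $\xi=\col\{h,L_fh,\ldots,L_f^{n-1}h\}$ as the working chart.

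\textbf{Necessity.} Suppose $z=\Phi(\chi)$ brings (\ref{sys11})--(\ref{sys12}) to the OCF~(\ref{sys2}). Then $h=z_n$, and the successive Lie derivatives are triangular in $z$: $L_fh=z_{n-1}+\alpha_n(z_n)$, and in general $L_f^kh=z_{n-k}+(\text{function of } z_{n-k+1},\ldots,z_n)$. Two consequences follow.
\begin{itemize}
\item The differentials ${\rm d}h,\ldots,{\rm d}L_f^{n-1}h$ are independent in $z$, and hence in $\chi$, since $\Phi$ is a diffeomorphism. This gives condition~1).
\item Define $\tau_k=\Phi_*^{-1}(\partial/\partial z_k)$. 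Because ${\rm d}L_f^{r}h$ has unit coefficient on ${\rm d}z_{n-r}$ and depends only on higher-indexed $z$'s, $\tau_1$ solves (\ref{ocf}).
\end{itemize}
Next compute $[\partial_{z_{k}},\,Az+\alpha(z_n)]=A e_k+\delta_{k,n}\alpha'(z_n)$. This equals $\partial_{z_{k+1}}$ for $k<n$, so $\tau_{k+1}=[\tau_k,f]$. Coordinate vector fields commute, so $[\tau_i,\tau_j]=0$, which is condition~2).

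\textbf{Sufficiency.} Assume 1) and 2). The first step is to show that $\tau_1,\ldots,\tau_n$ are pointwise independent. By induction using the Leibniz identity $L_{[\tau,f]}\lambda=L_\tau L_f\lambda-L_fL_\tau\lambda$, I would prove that $L_{\tau_k}L_f^rh=0$ for $r+k<n$ and $L_{\tau_k}L_f^{n-k}h=(-1)^{k-1}$. The matrix $[{\rm d}L_f^rh(\tau_k)]$ is therefore anti-triangular with $\pm1$ on the antidiagonal, hence nonsingular, so the $\tau_k$ are independent.

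Since the $\tau_k$ commute and span, the simultaneous-flow theorem (Frobenius for commuting frames) gives local coordinates $z$ in which $\tau_k=\partial/\partial z_k$, i.e.\ $\Phi_*\tau_k=\partial_{z_k}$. Write $\tilde f=\Phi_*f$. The relation $[\partial_{z_k},\tilde f]=\partial_{z_{k+1}}$ says $\partial\tilde f/\partial z_k=e_{k+1}$ for $k<n$. Integrating gives $\tilde f=Az+\alpha(z_n)$. Finally, ${\rm d}h(\tau_k)=\delta_{k,n}$, so $h=z_n+\text{const}$; the constant is absorbed by shifting $z_n$, which alters $\alpha$ harmlessly. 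This yields (\ref{sys2}).

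The main obstacle is the sign bookkeeping in the induction for $L_{\tau_k}L_f^rh$, together with confirming that the frame is complete. The commuting-flows construction itself is standard: the map $(z_1,\ldots,z_n)\mapsto\phi^{\tau_1}_{z_1}\circ\cdots\circ\phi^{\tau_n}_{z_n}(\chi_0)$ is a local diffeomorphism by the inverse function theorem. I will state the result locally on $\mathcal{U}$, as the lemma does.
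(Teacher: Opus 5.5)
The paper gives no proof of this lemma. It is quoted from Isidori's book as the classical Krener--Isidori theorem. Your argument is the standard proof of that result, and it is correct apart from one sign slip noted below.

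The nearest argument in the paper is the proof of Theorem~\ref{thm:Lie_PH_equiv} in Appendix~\ref{app2}, which uses your two moves specialized to the observer form. One direction exhibits coordinates with $\tau_1=\partial/\partial z_1$ and $[\partial_{z_i},f_p]=\partial_{z_{i+1}}$. The other applies Lemma~\ref{lem:straightening} and integrates $\partial a_j/\partial s_i=\delta_{j,i+1}$, exactly as in your derivation of $\tilde f=Az+\alpha(z_n)$.

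Your version is more complete on two points, both supplied by the Leibniz induction on $L_{\tau_k}L_f^rh$:
\begin{enumerate}
\item It shows $\tau_1,\ldots,\tau_n$ are pointwise independent. This is a hypothesis of Lemma~\ref{lem:straightening} that Appendix~\ref{app2} never verifies.
\item It shows $L_{\tau_k}h=\delta_{k,n}$. Appendix~\ref{app2} simply asserts this when it sets $s_n=x_1$.
\end{enumerate}
You also never need the observer-form reduction, since you work in the original $\chi$ chart.

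\textbf{The sign slip.} With $\tau_{k+1}=[\tau_k,f]$ and $L_{[X,Y]}=L_XL_Y-L_YL_X$ (the paper's convention),
\[
L_{\tau_{k+1}}L_f^{n-k-1}h=L_{\tau_k}L_f^{n-k}h-L_f\bigl(L_{\tau_k}L_f^{n-k-1}h\bigr)=L_{\tau_k}L_f^{n-k}h,
\]
because the function in parentheses is identically zero. So every antidiagonal entry is $+1$, not $(-1)^{k-1}$. For example, for $n=2$ in observer form, $\tau_1=\col\{0,1\}$, $\tau_2=\col\{1,\partial_2p\}$, and ${\rm d}h(\tau_2)=1$. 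Your own necessity computation agrees: $L_f^{n-k}h=z_k+\cdots$ and $\tau_k=\partial_{z_k}$.

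Nonsingularity of the matrix is unaffected. Your final step, however, needs the $+1$. As written, your formula gives ${\rm d}h(\tau_n)=(-1)^{n-1}$, which contradicts the ${\rm d}h(\tau_k)=\delta_{k,n}$ you use to conclude $h=z_n+\text{const}$.

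\textbf{Locality.} Say explicitly that what you prove is local: coordinates on a product neighborhood of each point of $\mathcal{U}$. The product (box) shape is also what makes it legitimate to integrate $\partial\tilde f/\partial z_k=e_{k+1}$ to a function of $z_n$ alone. This local statement is what the classical theorem asserts. A single diffeomorphism on an arbitrary region $\mathcal{U}$ would need extra global hypotheses, such as completeness of the $\tau_k$, so the lemma's ``in the region $\mathcal{U}$'' should be read locally.
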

The above lemma gives the necessary and sufficient condition for transforming $\chi$ to the OCF ($z$). The concrete construction method will be introduced in Appendix~\ref{app1}.

The subsequent arguments of this paper also require another set of coordinate transformations---the transformation from $\chi$ to the observer form ($x$). In $\mathcal{U}$, via the nonlinear diffeomorphism $x=\Psi(\chi)$:
\begin{align}
	x_1=h(\chi),~x_2=L_fh(\chi),~\ldots,~x_n=L_f^{n-1}h(\chi)\label{observerform-trans}
\end{align} 
one can easily transform~(\ref{sys11})--(\ref{sys12}) into an $n$-dimensional single-output nonlinear system:
\begin{align}\label{eq:chain}
	\dot{x}_1 =& x_2,\quad \dot{x}_2 = x_3,\quad \ldots,\quad \dot{x}_{n-1} = x_n,\notag\\ \dot{x}_n =& p(x_1, \ldots, x_n).
\end{align}
It is not difficult to see that this system is equivalent to the form of the high-order system~(\ref{sys3}) (literature \cite{Krener1985} the form~(\ref{eq:chain}) as the observer form). In the $x$ coordinates of the observer form, we define the vector field
\begin{equation}
	f_p(x) = \col\{x_2, x_3, \ldots, x_n, p(x)\}.
\end{equation}
The remainder of this paper will primarily focus on the system~(\ref{sys3}) or~(\ref{eq:chain}).

\subsection{Problem Description}

The literature~\cite{Duan2020III} proved that if the high-order system~(\ref{eq:chain}) can be expressed in the fully measured form:
\begin{align}
	x_1^{(n)}=\sum_{r=0}^{n-1}B_r^{(r)}(x_1),~y=x_1\label{fms}
\end{align}
then based on the diffeomorphism:
\begin{align}
	&z_n=x_1,\label{transform1}\\
	&z_k=x_{n-k+1}-\sum_{r=k}^{n-1}B_r^{(r-k)}(x_1),~k=1,\ldots,n-1,\label{transform2}
\end{align}
one can directly obtain the observer canonical form~(\ref{sys2}), where $\alpha_i(y)=B_{i-1}(y)$. One can thus construct a nonlinear observer with linear error dynamics. However, as noted in Section~1.1, given a high-order system $x_1^{(n)}=p(x)$, we have no way of determining whether $p(x)$ satisfies the condition~(\ref{fms-p}), nor can we directly extract the key elements $B_r(x_1)$, $r=0,\ldots,n-1$, of the fully measured form~(\ref{fms}) from $p(x)$. And if one resorts to the condition~(\ref{ocf}) and the methods given in Appendix~\ref{app1} to verify and obtain the OCF, the process is extremely cumbersome. Therefore, the objective of this paper is to establish, based on $p(x)$, a necessary and sufficient condition equivalent to~(\ref{ocf}) that enables the system~(\ref{eq:chain}) to possess the fully measured form~(\ref{fms}), and further to establish, via $p(x)$, a formulaic method for obtaining the nonlinear functions $B_r(x_1)$, $r=0,\ldots,n-1$.

\section{Pascal's Triangle--Hessian Matrix Criterion Method}\label{sec3}

Subsection~\ref{sec3.1} of this section will present the Pascal-Hessian condition based on $p(x)$ (this condition is the necessary and sufficient condition for the system~(\ref{eq:chain}) to be transformable into the high-order fully measured form~(\ref{fms})) together with the solution method for the functions $B_r(x_1),~r=0,\ldots,n-1$; Subsection~\ref{sec3.2} will prove the conclusions of~\ref{sec3.1}; subsequently, in Subsection~\ref{sec3.3}, we present a detailed comparison of the advantages and disadvantages of the two conditions and the corresponding OCF solution methods.

\subsection{Pascal-Hessian Condition and the Main Theorem}\label{sec3.1}

First, we directly give the definition of the Pascal-Hessian condition.
\begin{definition}[Pascal-Hessian Condition]\label{def:PH}
	A function $p(x)$ is said to satisfy the Pascal-Hessian condition if and only if for all $1 \leq i, j \leq n$, the following holds:
	\begin{equation}\label{eq:PH}
		\frac{\partial^2 p}{\partial x_i \partial x_j} =
		\begin{cases}
			\displaystyle \binom{i+j-2}{i-1} \cdot \frac{\partial^2 p}{\partial x_1 \partial x_{i+j-1}}, & i+j-1 \leq n, \\[1.5em]
			0, & i+j-1 > n.
		\end{cases}
	\end{equation}
	Equivalently, the Hessian matrix $\mathcal{H}(p)$ of $p(x)$ possesses a Pascal triangle structure. By Pascal triangle structure we mean that the upper-left corner (including the anti-diagonal) of $\mathcal{H}(p)$ forms a Pascal triangle, while the lower-right corner is identically zero.
\end{definition}

We will use 3rd-order, 4th-order, and 5th-order systems to exhibit the concrete form of the Hessian matrix with Pascal triangle structure. Taking $n=4$ as an example, the Hessian matrix $\mathcal{H}(p)|_{n=4}$ with Pascal triangle structure is:
\begin{equation*}
\mathcal{H}(p)|_{n=4} = 
\begin{bmatrix}
1\cdot \partial_1\partial_1 p & 1\cdot \partial_1\partial_2 p & 1\cdot \partial_1\partial_3 p & 1\cdot \partial_1\partial_4 p  \\
1\cdot \partial_1\partial_2 p & 2\cdot \partial_1\partial_3 p & 3\cdot \partial_1\partial_4 p & 0  \\
1\cdot \partial_1\partial_3 p & 3\cdot \partial_1\partial_4 p & 0 & 0  \\
1\cdot \partial_1\partial_4 p & 0 & 0 & 0 

\end{bmatrix},
\end{equation*}
where $\partial_i\triangleq \frac{\partial}{\partial x_i}$ (so that $\partial_i\partial_jp=\frac{\partial^2p}{\partial x_j\partial x_i}$). From~$\mathcal{H}(p)|_{n=4}$ one can see that in the matrix, bounded by the anti-diagonal, the lower-right entries are all zero; whereas in the upper-left part, every entry is a multiple of the entry in the first column of its lower-left, and this multiple is one of the binomial coefficients (for example, the coefficients of the four entries on the anti-diagonal are $1, 3, 3, 1$, respectively); all the coefficients in the upper-left part of the matrix, taken together, exactly constitute a Pascal triangle. We place the Hessian matrices with Pascal triangle structure for $n=3$ and $n=5$ in Fig.~\ref{yanghui1} for the reader's further understanding.

\begin{figure}[!t]
	\centering
	\includegraphics[width=14cm]{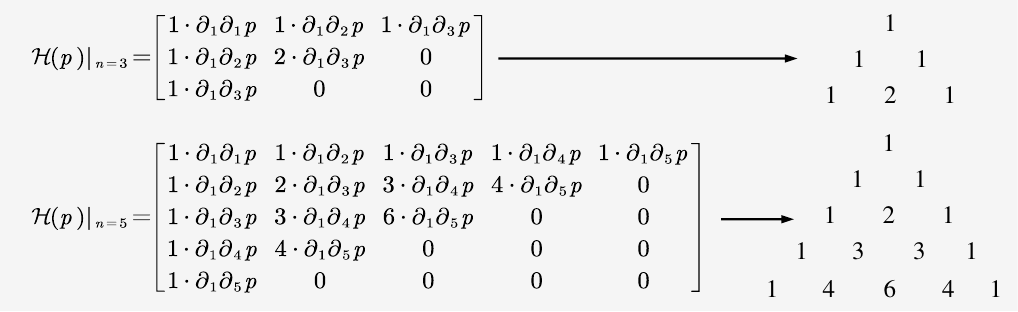}\\
	\caption{Illustration of the Pascal-Hessian condition for 3rd- and 5th-order systems.}\label{yanghui1}
\end{figure}

At this point, we can state the main theorem of this paper as follows.

\begin{theorem}\label{thm:PH}
	Given the high-order system~(\ref{sys3}), there exists a family of smooth univariate functions:
	\begin{align}
		B_0(x_1) &= p(x_1, 0, \ldots, 0), \label{eq:B0} \\
		B_r(x_1) &= \int \frac{\partial p}{\partial x_{r+1}}(x_1, 0, \ldots, 0) \, {\rm d}x_1,\quad r = 1, \ldots, n-1, \label{eq:Br}
	\end{align}
	such that the high-order system can possess the high-order fully measured form~(\ref{fms}) (equivalently, $p(x)=\sum_{r=0}^{n-1} D^r B_r(x_1)$) in the region $\Psi(\mathcal{U})$ (where $\Psi$ is the diffeomorphism defined in~(\ref{observerform-trans})) if and only if $p(x)$ satisfies the Pascal-Hessian condition in the region $\Psi(\mathcal{U})$. 
\end{theorem}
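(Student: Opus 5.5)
The plan is to encode the total time derivative along the chain~(\ref{eq:chain}) by a generating function. Write $D$ for the derivation $D=\sum_{k=1}^{n-1}x_{k+1}\partial_k$, which is the time derivative along~(\ref{eq:chain}) acting on functions of $x_1,\ldots,x_{n-1}$. For a smooth univariate $B$ and $r\le n-1$, $D^rB(x_1)$ depends only on $x_1,\ldots,x_{r+1}$. Introduce the formal variable $t$ and the polynomial
\[
\xi(t)=\sum_{k=1}^{n}x_k\,\frac{t^{k-1}}{(k-1)!},
\]
which is the Taylor polynomial of $x_1(\cdot)$. Then Faà di Bruno's formula gives
\[
D^rB(x_1)=r!\,[t^r]\,B(\xi(t)),
\]
where $[t^r]$ denotes the coefficient of $t^r$. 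Since $\partial\xi/\partial x_i=t^{i-1}/(i-1)!$, differentiating under the coefficient extraction yields
\[
\partial_i\partial_j D^rB=r!\,[t^r]\,B''(\xi(t))\,\frac{t^{i+j-2}}{(i-1)!\,(j-1)!}.
\]
For $i+j-1\le n$, comparing this with $\partial_1\partial_{i+j-1}D^rB=r!\,[t^r]\,B''(\xi)\,t^{i+j-2}/(i+j-2)!$ gives exactly the factor $\binom{i+j-2}{i-1}$. For $i+j-1>n$ the exponent satisfies $i+j-2\ge n>r$, so the coefficient vanishes. Thus every $D^rB$ satisfies~(\ref{eq:PH}). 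Since~(\ref{eq:PH}) is linear in $p$, any $p=\sum_r D^rB_r$ satisfies the Pascal-Hessian condition, which proves necessity.

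For sufficiency, let $B_r$ be given by (\ref{eq:B0})--(\ref{eq:Br}) and set $q=p-\sum_{r=0}^{n-1}D^rB_r$. The integration constants in $B_r$ are irrelevant for $r\ge1$ because $D^r$ kills constants. By linearity and the necessity part, $q$ also satisfies~(\ref{eq:PH}).

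Next I evaluate $q$ on the axis $(x_1,0,\ldots,0)$. There $\xi(t)=x_1$. Hence $D^rB_r=0$ for $r\ge1$, while
\[
\partial_kD^rB_r=r!\,[t^r]\,B_r'(x_1)\,\frac{t^{k-1}}{(k-1)!}=\delta_{r,k-1}B_r'(x_1).
\]
Together with (\ref{eq:B0})--(\ref{eq:Br}), this shows that $q$ and all of $G_m:=\partial_mq$, $m=1,\ldots,n$, vanish on the axis.

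I then show $G_m\equiv0$ by downward induction on $m$. The condition~(\ref{eq:PH}) reads $\partial_jG_m=\binom{m+j-2}{m-1}\partial_1G_{m+j-1}$, with the right-hand side set to zero when $m+j-1>n$.
\begin{itemize}
\item For $m=n$, every $j\ge2$ gives $\partial_jG_n=0$. So $G_n$ depends on $x_1$ only, and since it vanishes on the axis, $G_n\equiv0$.
\item Assume $G_{m+1},\ldots,G_n\equiv0$. For $j\ge2$, $\partial_jG_m$ is a multiple of $\partial_1G_{m+j-1}$ with $m+j-1>m$, which is zero by the inductive hypothesis. So $G_m$ depends on $x_1$ alone and hence is zero.
\end{itemize}
This reaches $m=1$ as well, so $\nabla q\equiv0$. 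Because $q$ vanishes on the axis, $q\equiv0$, that is, $p=\sum_rD^rB_r$.

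The main obstacle I expect is not the algebra but the domain. The steps ``$\partial_jG=0$ for $j\ge2$ implies $G$ is a function of $x_1$'' and ``vanishes on the axis, hence identically'' require that each point of $\Psi(\mathcal U)$ be joined to $(x_1,0,\ldots,0)$ by a path that moves only in $x_2,\ldots,x_n$ and stays in the region. The formulas (\ref{eq:B0})--(\ref{eq:Br}) themselves already presuppose that these axis points lie in the region. I would therefore state, and use, the assumption that $\Psi(\mathcal U)$ contains the relevant axis points and is convex (or star-shaped) in the $(x_2,\ldots,x_n)$ directions, for instance a product neighbourhood. Otherwise I would argue locally and patch the pieces together using the uniqueness of the $B_r$ up to constants.
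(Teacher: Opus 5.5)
Your proof is correct. The sufficiency half is the paper's argument: the residual $q=p-\sum_r D^rB_r$, the vanishing of $q$ and its gradient on the section $\Sigma$, and descending induction on $\partial_kq$. You run the induction down to $\partial_1q$ and use $\nabla q\equiv0$, whereas the paper stops at $\partial_2q$ and concludes $q=q(x_1)$; this difference is immaterial.

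The genuine difference is how you compute the derivatives of $D^rB(x_1)$.

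\textbf{The paper's route.} It proves the operator identity $[\partial_i,D]=\partial_{i-1}$ and expands $\partial_iD^m=\sum_k\binom{m}{k}D^{m-k}\partial_{i-k}$ by induction. It applies the resulting formula $\partial_iD^mF=\binom{m}{i-1}D^{m-i+1}F'$ twice, with Lemma~\ref{lem-1} covering the case $i-1>m$. It then needs the absorption identity $\binom{m}{a}\binom{m-a}{b}=\binom{m}{a+b}\binom{a+b}{a}$ to recognise the Pascal coefficient. On $\Sigma$ it argues that $D^{m-r+1}\partial_1B_m$ carries a factor among $x_2,\ldots,x_n$.

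\textbf{Your route.} The representation $D^rB=r!\,[t^r]\,B(\xi(t))$ is valid precisely because $r\le n-1$ never calls for the absent $x_{n+1}$. It collapses all of the above. Since $\partial_{x_i}\xi=t^{i-1}/(i-1)!$, the coefficient $\binom{i+j-2}{i-1}=\frac{(i+j-2)!}{(i-1)!\,(j-1)!}$ appears directly. The lower-right zeros become a degree count, and the values on the axis follow from $\xi\equiv x_1$ there. This also explains where Pascal's triangle comes from.

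\textbf{What each buys.} The paper's route yields an operator-level identity valid on arbitrary functions, phrased in the Lie-bracket language used elsewhere, and it is reused for Lemma~\ref{lem:D-multi}. Your generating function extends to that case just as easily by taking $\xi$ vector-valued.

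\textbf{The domain hypothesis.} Here you are more careful than the paper. The paper justifies evaluation on $\Sigma$ only by an informal ``order of singularity'' argument, and then silently uses that every point is joined to $\Sigma$ inside its $x_1$-fibre. Your hypothesis (axis points present, fibres star-shaped toward the axis) is in fact necessary, and your local-patching fallback cannot replace it.

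For $n=2$ the Pascal-Hessian condition is just $\partial_2^2p=0$. Take
$\Omega=\big((0,3)\times(-1,1)\big)\cup\big((0,1)\times(-1,4)\big)\cup\big((0,3)\times(3,4)\big)$.
Let $\psi$ be smooth with $\psi=0$ on $(0,1]$ and $\psi\neq0$ on $(1,3)$. Set $p=\psi(x_1)$ on the last rectangle and $p=0$ elsewhere.

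Then $p$ is smooth and satisfies the condition, and the axis lies in $\Omega$. Yet any representation $p=B_0(x_1)+x_2B_1'(x_1)$ on $(1,3)\times(-1,1)$ forces $B_0=B_1'=0$ there. This contradicts $p\neq0$ on $(1,3)\times(3,4)$.

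The local $B_r$ over the same $x_1$ on the two sheets disagree, so no patching can work. Keep the fibre assumption as part of the statement.
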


Theorem \ref{thm:PH} gives a necessary and sufficient condition for a high-order system to admit a high-order full-measurement form, namely the Pascal--Hessian condition. The proof of this theorem is rather involved; to preserve the continuity of the main narrative, we will present its proof in subsection \ref{sec3.2}.

Theorem~\ref{thm:PH} simultaneously provides an operable verification method and a construction method. For verification, after computing the Hessian matrix of $p(x)$, one only needs to check whether its lower-right corner is identically zero and whether the upper-left coefficients conform to Pascal's triangle---since the second-order derivatives of linear terms are zero, the verification only needs to be performed on the nonlinear terms in $p(x)$. For construction, $B_r(x_1)$ is directly given by the indefinite integral~(\ref{eq:B0})--(\ref{eq:Br}) on the section $\Sigma = \{x\mid x_2=\cdots=x_n=0\}$; substituting into formulas~(\ref{transform1})--(\ref{transform2}) immediately yields the OCF coordinate transformation and the output injection functions $\alpha_i(y)=B_{i-1}(y)$. The entire process requires no computation of $\tau$ vector fields, no PDE solving, and no matrix inversion.

\subsection{Proof of Theorem~\ref{thm:PH}}\label{sec3.2}

Before we begin, we first give the definition of the total derivative operator and prove two lemmas related to it. We define the total derivative operator $D$ as:
	\begin{equation}\label{eq:D}
		D = \sum_{\ell=1}^{n-1} x_{\ell+1} \frac{\partial}{\partial x_\ell}.
	\end{equation}
According to this definition, $D$ is a vector field and can be expressed in vector form by its coefficients as 
\begin{align}\label{field-D}
	D=\begin{bmatrix}
		x_2,\ldots,x_{n},0
	\end{bmatrix}^\top.
\end{align}

\begin{lemma}\label{lem-1}
	Let $F(x_1)$ be a smooth univariate function. Then for any $r=0,\ldots,n-1$, $D^r F(x_1)$ depends at most on $x_1,\ldots,x_{r+1}$.   
\end{lemma}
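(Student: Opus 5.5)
We argue by induction on $r$, using the following stronger statement: $D^rF(x_1)$ is a smooth function of $x_1,\ldots,x_{r+1}$ alone, for every $r=0,\ldots,n-1$. The induction runs only up to $r=n-1$, so the index $r+1$ never exceeds $n$. The only properties of $D$ we need come from its definition (\ref{eq:D}). First, $D$ is a first-order differential operator. Second, its coefficient on $\partial/\partial x_\ell$ is $x_{\ell+1}$.

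\emph{Base case.} For $r=0$ we have $D^0F(x_1)=F(x_1)$, which depends only on $x_1$.

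\emph{Inductive step.} Suppose that for some $r\le n-2$ the function $G:=D^rF(x_1)$ depends at most on $x_1,\ldots,x_{r+1}$. Then $\partial G/\partial x_\ell\equiv 0$ for every $\ell\ge r+2$. Applying (\ref{eq:D}) gives
\begin{align*}
D^{r+1}F(x_1)=DG=\sum_{\ell=1}^{n-1}x_{\ell+1}\frac{\partial G}{\partial x_\ell}=\sum_{\ell=1}^{r+1}x_{\ell+1}\frac{\partial G}{\partial x_\ell}(x_1,\ldots,x_{r+1}).
\end{align*}
Since $r+1\le n-1$, every term $\ell\le r+1$ really appears in the sum defining $D$; the truncation of $D$ at $\ell=n-1$ plays no role here. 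Each term in the last sum depends only on $x_1,\ldots,x_{r+1}$ and on the multiplier $x_{\ell+1}$ with $\ell+1\le r+2$. Hence $D^{r+1}F(x_1)$ depends at most on $x_1,\ldots,x_{r+2}$, which completes the induction.

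Smoothness is preserved at each step, because $DG$ is built from products and sums of smooth functions. There is no real obstacle in this argument; the only care needed is the index bookkeeping just noted.

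A sharper form of the same computation will be useful later. Since $x_{r+1}$ enters $D^rF(x_1)$ only through the top term $x_{r+1}\,\partial_{x_r}D^{r-1}F$, the same induction shows $\partial D^rF/\partial x_{r+1}=F'(x_1)$ for $r\ge 1$. We expect this to feed into the Pascal-coefficient computations that follow.
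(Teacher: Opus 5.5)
Your induction is correct and is essentially the paper's own argument: you expand $D^{r+1}F=\sum_{\ell}x_{\ell+1}\partial_\ell D^rF$, use the inductive hypothesis to discard the terms with $\ell>r+1$, and note that the surviving multipliers involve at most $x_{r+2}$. Your closing remark $\partial_{r+1}D^rF=F'(x_1)$ is also correct; it is the case $i=m+1$ of Lemma~\ref{lem-D}.
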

\begin{proof}
	We proceed by induction on $r$. For the case of $r=0,1$: $D^0F(x_1)=F(x_1)$ depends only on $x_1$; $DF(x_1)=x_2\partial_1F(x_1)$ depends only on $x_1,~x_2$. Then, we assume $D^rF(x_1)$ depends at most on $x_1,\ldots,x_{r+1}$ for some $r\geq 1$. Hence, by the definition of $D$,
	\begin{align*}
	D^{r+1}F(x_1)=D(D^rF(x_1))=\sum_{\ell=1}^{n-1}x_{\ell+1}\frac{\partial}{\partial x_\ell}D^rF(x_1).
	\end{align*}
	By the induction hypothesis, $\frac{\partial}{\partial x_\ell}D^rF(x_1)=0$ for all $\ell>r+1$. Hence each nonzero term $x_{\ell+1}\frac{\partial}{\partial x_\ell}D^rF(x_1)$ in the sum satisfies $\ell\leq r+1$, so $x_{\ell+1}$ involves at most $x_{r+2}$. Therefore $D^{r+1}F(x_1)$ depends at most on $x_1,\ldots,x_{r+2}$.
	
	By induction, the claim holds for all $r=0,\ldots,n-1$.
\end{proof}

\begin{lemma}\label{lem-D}
	For any smooth univariate function $F(x_1)$ and integer $m \geq 0$, $1 \leq i \leq n$, we have:
	\begin{equation}\label{eq:op_deriv}
		\partial_i D^m F(x_1) = \binom{m}{i-1} D^{m-i+1} \partial_1F(x_1),
	\end{equation}
	with the convention that when $i-1 > m$, $D^{m-i+1}=0$ and $\binom{m}{i-1} = 0$.
\end{lemma}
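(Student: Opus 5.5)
We prove the identity by induction on $m$, holding $i$ fixed at each stage. The key tool is the commutator of $\partial_i$ with $D$. Since $D=\sum_{\ell=1}^{n-1}x_{\ell+1}\partial_\ell$ and $\partial_i x_{\ell+1}=\delta_{i,\ell+1}$, for any smooth $G$ we have
\begin{align*}
\partial_i D G = D\,\partial_i G + \partial_{i-1} G,
\end{align*}
where we set $\partial_0\triangleq 0$. For $i=1$ this reads $\partial_1 D=D\partial_1$. This commutation relation is the only structural fact we need.

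\textbf{Base case.} For $m=0$, $\partial_i F(x_1)=\delta_{i,1}F'(x_1)$. This agrees with $\binom{0}{i-1}D^{1-i}\partial_1F$ under the stated convention that the right-hand side vanishes when $i-1>m$.

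\textbf{Inductive step.} Assume the identity holds for $m$ and for all $i$. Applying the commutator with $G=D^mF$ gives
\begin{align*}
\partial_i D^{m+1}F = D\bigl(\partial_i D^m F\bigr) + \partial_{i-1}D^mF .
\end{align*}
Substituting the induction hypothesis into both terms yields
\begin{align*}
\partial_i D^{m+1}F = \binom{m}{i-1}D^{m-i+2}\partial_1F+\binom{m}{i-2}D^{m-i+2}\partial_1F .
\end{align*}
Pascal's rule $\binom{m}{i-1}+\binom{m}{i-2}=\binom{m+1}{i-1}$ then closes the induction, producing the claimed exponent $(m+1)-i+1$.

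\textbf{Edge cases.} These need a separate check, and they are where I expect the only real care is required.
\begin{itemize}
\item For $i=1$, the second term is absent because $\partial_0=0$, which matches $\binom{m}{-1}=0$.
\item When $i-1>m+1$, both coefficients vanish.
\item When $i-1=m+1$, the first coefficient $\binom{m}{m+1}$ is $0$, so the formally negative power $D^{-1}$ it multiplies is harmless. The second term is $\partial_{m+1}D^mF=D^0\partial_1F$, coming from the hypothesis with $i-1=m$, and it carries the correct coefficient $1=\binom{m+1}{m+1}$.
\end{itemize}
The convention that any term with a zero binomial coefficient is discarded should therefore be stated explicitly before running the computation.

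Finally, the truncation of $D$ at $\ell=n-1$ must be checked. The commutator formula holds for every $i$ with $1\le i\le n$: the term $\partial_{i-1}$ arises from $\ell=i-1\le n-1$, so no boundary issue appears. Lemma~\ref{lem-1} is not strictly needed, though it is consistent with the result, since it shows that $\partial_i D^m F=0$ for $i>m+1$.
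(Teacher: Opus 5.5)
Your proof is correct. It uses the same two ingredients as the paper's proof: the commutator relation $[\partial_i,D]=\partial_{i-1}$ (with $[\partial_1,D]=0$), and Pascal's rule inside an induction on $m$. The organization differs, though.

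The paper first proves the operator identity $\partial_iD^m=\sum_{k=0}^{m}\binom{m}{k}D^{m-k}\partial_{i-k}$. It appends the new factor of $D$ on the right and commutes it leftward past every $\partial_{i-k}$. It then applies this identity to $F(x_1)$, where only the $k=i-1$ term survives, and treats the case $i-1>m$ separately through Lemma~\ref{lem-1}.

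You instead peel the new $D$ off on the left and induct directly on the scalar identity, with the hypothesis quantified over all $i$ at once. As a result, the commutator is used once per step, and the hypothesis is invoked at the two indices $i$ and $i-1$. The range $i-1>m$ is absorbed into the same induction, so the case split and Lemma~\ref{lem-1} become unnecessary.

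The paper's route yields a more general statement: its operator expansion holds on arbitrary smooth functions, not only on functions of $x_1$. Your route is shorter and self-contained. Note also that the paper's operator identity already covers $i-1>m$, since no $k$ with $i-k=1$ lies in $0\le k\le m$. So its appeal to Lemma~\ref{lem-1} is equally avoidable, which matches your closing remark.
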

\begin{proof}
	We discuss two cases.
	
	1) Consider the case $i-1\leq m$. First, we prove $[\partial_i,D]=\partial_{i-1},~i=2,\ldots,n$ and $[\partial_1,D]=0$. Indeed, we know that the total derivative operator is a vector field and its coefficient vector in $x$-coordinates is~(\ref{field-D}); moreover, $\partial_i$ is also a vector field and its coefficient vector is $e_i$, where $e_i$ is the $n$-dimensional column vector with only the $i$-th entry equal to $1$ and all other entries equal to $0$. Hence, using the Lie bracket formula~(\ref{Lie-bracket}), we obtain
	\begin{align}
		[\partial_i,D]=\sum_{j=1}^n\left(\frac{\partial}{\partial x^\top}\col\{x_2,\ldots,x_n,0\}e_i\right)_j\frac{\partial}{\partial x_j}.
	\end{align}
	Note that when $i=1$, $\frac{\partial}{\partial x^\top}\col\{x_2,\ldots,x_n,0\}e_i=0$, hence $[\partial_1,D]=0$; moreover, when $i=2,\ldots,n$, we have
	\begin{align}
		\frac{\partial}{\partial x^\top}\col\{x_2,\ldots,x_n,0\}e_i=e_{i-1}.
	\end{align}
	Therefore, $[\partial_i,D]=\frac{\partial}{\partial x_{i-1}}=\partial_{i-1}$.
	
	Further, we adopt the convention that $\partial_{i-k}=0$ when $i-k\leq 0$, and prove by mathematical induction that
	\begin{align}
		\partial_iD^m=\sum_{k=0}^m\binom{m}{k}D^{m-k}\partial_{i-k}.\label{binom}
	\end{align}
	When $m=0$, we have
	\begin{align}
		\partial_iD^0=\partial_i=\sum_{k=0}^0\binom{0}{0}D^0\partial_i=\binom{0}{0}D^0\partial_i.
	\end{align}
	The conclusion holds. Assuming~(\ref{binom}) holds for $m$, then for $m+1$, using the Lie bracket definition $[\partial_i,D]=\partial_i D-D\partial_i$, we obtain
	\begin{align}
		\partial_iD^{m+1}=&(\partial_i D^m)D=\sum_{k=0}^m\binom{m}{k}D^{m-k}\partial_{i-k}D\notag\\
		=&\sum_{k=0}^m\binom{m}{k}D^{m-k}\left(D\partial_{i-k}+[\partial_{i-k},D]\right)\notag\\
		=&\sum_{k=0}^m\binom{m}{k}D^{m-k+1}\partial_{i-k}+\sum_{k=0}^m\binom{m}{k}D^{m-k}\partial_{i-k-1}\notag\\
		=&\binom{m}{0}D^{m+1}\partial_i+\binom{m}{m}D^0\partial_{i-m-1}\notag\\
		&+\sum_{k=1}^m\left(\binom{m}{k}+\binom{m}{k-1}\right)D^{m-k+1}\partial_{i-k}\notag\\
		=&\binom{m+1}{0}D^{m+1}\partial_i+\sum_{k=1}^m\binom{m+1}{k}D^{m-k+1}\partial_{i-k}\notag\\
		&+\binom{m+1}{m+1}D^0\partial_{i-m-1}\notag\\
		=&\sum_{k=0}^{m+1}\binom{m+1}{k}D^{m-k+1}\partial_{i-k}.
	\end{align}
	The induction is complete.
	
	Hence, using the induction result~(\ref{binom}), we have
	\begin{align}
		\partial_iD^mF(x_1)=&\sum_{k=0}^m\binom{m}{k}D^{m-k}\partial_{i-k}F(x_1)\notag\\
		=&\binom{m}{i-1}D^{m-i+1}\partial_1 F(x_1).
	\end{align}
	Thus the case $i-1\leq m$ is proved.
	
	2) Consider the case $i-1>m$. In this case, we only need to prove $\partial_i D^m F(x_1)=0$, then by the convention $\binom{m}{i-1}=0$ when $i-1>m$, formula~(\ref{eq:op_deriv}) still holds. Indeed, by Lemma~\ref{lem-1}, $D^mF(x_1)$ depends only on $x_1,\ldots,x_{m+1}$, and since $i>m+1$, we have
	\begin{align}
		\partial_i D^m F(x_1)=\frac{\partial}{\partial x_{i}}(D^mF)(x_1,\ldots,x_{m+1})=0.
	\end{align}
	The conclusion is proved.
\end{proof}

Below, we prove Theorem~\ref{thm:PH}.
\begin{proof}[Proof of Theorem~\ref{thm:PH}]
	1) First, we prove necessity. Suppose $p(x) = \sum_{m=0}^{n-1} D^m B_m(x_1)$. For each monomial $D^m B_m(x_1)$, applying Lemma~\ref{lem-D} twice yields
	\begin{align}\label{binom1}
		&\partial_i\partial_j D^m B_m(x_1) \notag\\
		= &\binom{m}{i-1}\binom{m-i+1}{j-1} D^{m-i-j+2} B_m''(x_1),
	\end{align}
	where $B_m''(x_1)=\frac{\partial^2B_m(x_1)}{\partial x_1^2}$. Similarly, we have:
	\begin{align}\label{binom2}
		&\partial_1\partial_{i+j-1} D^m B_m(x_1) \notag\\
		=& \binom{m}{0}\binom{m}{i+j-2} D^{m-i-j+2} B_m''(x_1).
	\end{align}
	Using the combinatorial absorption identity
	\begin{align}
		\binom{m}{a}\binom{m-a}{b}=\binom{m}{a+b}\binom{a+b}{a},
	\end{align}
	we obtain
	\begin{align}
		\binom{m}{i-1}\binom{m-i+1}{j-1}=\binom{i+j-2}{i-1}\binom{m}{i+j-2}.
	\end{align}
	Thus, combining~(\ref{binom1}) and~(\ref{binom2}) gives
	\begin{align}\label{binom3}
		\partial_i\partial_j D^m B_m(x_1) = \binom{i+j-2}{i-1} \partial_1\partial_{i+j-1} D^m B_m(x_1).
	\end{align}
	
	Hence, using the linearity of second-order derivatives, we obtain
	\begin{align}
		\partial_i\partial_j p(x) =& \sum_{m=0}^{n-1} \partial_i\partial_j D^m B_m(x_1)
		\notag\\
		=& \binom{i+j-2}{i-1} \sum_{m=0}^{n-1} \partial_1 \partial_{i+j-1} D^m B_m(x_1)
		\notag\\
		=& \binom{i+j-2}{i-1} \partial_1 \partial_{i+j-1} p(x).
	\end{align}
	Note that when $i+j-1 > n$, we have $i+j-2 > n-1$, and since $m \leq n-1$, $\binom{m}{i+j-2}=0$, hence $\partial_1\partial_{i+j-1} D^m B_m(x_1) = 0$. Therefore, condition~(\ref{eq:PH}) holds, i.e., $p(x)$ satisfies the Pascal-Hessian condition.

	2) Next, we prove sufficiency. Suppose $p(x)$ satisfies the Pascal-Hessian condition. Define a residual function
	\begin{equation}\label{eq:res}
		q(x) \triangleq p(x) - \sum_{r=0}^{n-1} D^r B_r(x_1),
	\end{equation}
	where the univariate functions $B_0,\ldots,B_{n-1}$ are given by formulas~(\ref{eq:B0})--(\ref{eq:Br}).
	
	Define a section $\Sigma = \{x\mid x_2=\cdots=x_n=0\}$. First, we show that $p(x)$ and $\partial_i p(x),~i=2,\ldots,n$ are well-defined on $\Sigma$. Actually, setting $i=j=k$ ($k=2,\ldots,n$) in the Pascal-Hessian condition~(\ref{eq:PH}) yields
	\[
	\partial_k^2 p(x) = \binom{2k-2}{k-1}\,\partial_1\partial_{2k-1}p(x)
	\]
	(interpreted as $0$ when $2k-1>n$) as an identity of functions on $\Psi(\mathcal{U})$. If $p(x)$ involved a factor singular in $x_k$, the left-hand side would carry two additional orders of singularity in $x_k$ compared to the right-hand side, making the equality impossible. Hence $p(x)$ and $\partial_i p(x)$ ($i=2,\ldots,n$) are well-defined on $\Sigma$.
	
	Moreover, on $\Sigma$, $D = \sum_{\ell=1}^{n-1} x_{\ell+1}\partial_\ell$ is identically zero (since $x_2=\cdots=x_n=0$), so for any $m\geq 1$ we have $D^m B_m(x_1)|_\Sigma = 0$; and $D^0 B_0(x_1)|_\Sigma = B_0(x_1) = p(x_1,0,\ldots,0) = p(x)|_\Sigma$. Hence $q(x)|_\Sigma = 0$.
	
	Further, we prove that for $r=2,\ldots,n$, $\partial_r q(x)|_\Sigma=0$. By Lemma~\ref{lem-D}, for any $m=0,\ldots,n-1$, we have
	\begin{align}
		\partial_r D^m B_m(x_1) = \binom{m}{r-1} D^{m-r+1} \partial_1 B_m(x_1).
	\end{align}
	For the case $r-1>m$, the right-hand side is $0$. For the case $r-1\leq m$, since when $m-r+1 > 0$, the expression $D^{m-r+1}\partial_1 B_m(x_1)$ necessarily contains at least one factor among $x_2,\ldots,x_n$, it follows that on $\Sigma$, $D^{m-r+1}\partial_1 B_m(x_1)=0$. Hence, only when $m-r+1 = 0$, i.e., $m = r-1$, does $\partial_r D^{r-1} B_{r-1}(x_1)|_\Sigma = \partial_1 B_{r-1}(x_1)\neq 0$ hold. By the definition~(\ref{eq:Br}) of $B_{r-1}(x_1)$, $\partial_1B_{r-1}(x_1) = \partial_r p(x)|_\Sigma$, therefore
	\begin{align}
		\partial_rq(x)\Big|_\Sigma=&\partial_r p(x)|_\Sigma-\partial_r\left(\sum_{\ell=0}^{n-1} D^\ell B_\ell(x_1)\right)\Big|_\Sigma\notag\\
		=&\partial_1 B_{r-1}(x_1)-\partial_1 B_{r-1}(x_1)=0.
	\end{align}
	This shows that $\partial_r q(x)|_\Sigma = 0$.
	
	From~(\ref{binom3}) in the necessity proof, we know that for any $r=0,\ldots,n-1$, $D^r B_r(x_1)$ satisfies the Pascal-Hessian condition; together with the fact that $p(x)$ satisfies the Pascal-Hessian condition, it follows that $q(x)$ also satisfies the Pascal-Hessian condition. Hence, we shall now prove by descending induction that $\partial_r q(x) \equiv 0$ ($r = n, n-1,\ldots, 2$).
	
	For $r=n$: for any $j\geq 2$, we have $n+j-1 > n$, so by the definition of the Pascal-Hessian condition, $\partial_n\partial_j q(x) = 0$, which implies that $\partial_n q(x)$ depends only on $x_1$. Moreover, since $\partial_n q(x)|_\Sigma = 0$ (we have already proved that $\partial_r q(x)|_\Sigma=0$ for $r=2,\ldots,n$), it follows that $\partial_n q(x) \equiv 0$.
	
	Now suppose that for some $r$ ($2 \leq r < n$), $\partial_\ell q(x) \equiv 0$ holds for $\ell = r+1,\ldots,n$. Then, for any $j\geq 2$: if $r+j-1 > n$, the Pascal-Hessian condition directly yields $\partial_r\partial_j q(x) = 0$; conversely, if $r+j-1 \leq n$, then $r+j-1 \geq r+1 > r$, and by the induction hypothesis $\partial_{r+j-1} q(x) \equiv 0$ together with the Pascal-Hessian condition, we still obtain
	\begin{align}
		\partial_r\partial_j q(x)=\binom{r+j-2}{r-1}\partial_1\partial_{r+j-1} q(x) = 0.
	\end{align}
	Therefore, all partial derivatives of $\partial_r q$ with respect to $x_2,\ldots,x_n$ are zero, i.e., $\partial_r q(x)$ depends only on $x_1$. Combining this with $\partial_r q(x)|_\Sigma = 0$ yields $\partial_r q(x) \equiv 0$.
	
	At this point, the mathematical induction is complete, and we have $\partial_2 q(x) \equiv \cdots \equiv \partial_n q(x) \equiv 0$. Hence $q(x)$ is independent of $x_2,\ldots,x_n$, i.e., $q(x) = q(x_1)$. Moreover, $q(x)|_\Sigma = 0$, so $q(x) \equiv 0$. Thus
	\begin{align}
		p(x) = \sum_{r=0}^{n-1} D^r B_r(x_1),
	\end{align}
	i.e., $p(x)$ satisfies the FMS condition~(\ref{fms-p}). Sufficiency is proved.
\end{proof}

The above completes the rigorous proof for the single-output case. Below, we present a systematic comparison between the Pascal-Hessian condition and the traditional OCF necessary and sufficient condition.

\subsection{Comparison Between the Pascal-Hessian Matrix Method and the Traditional OCF Method}\label{sec3.3}

In Sections~\ref{sec3.1} and~\ref{sec3.2}, we have presented the main theorem of the Pascal-Hessian matrix method for single-output nonlinear systems to achieve observer error linearization together with its complete proof. In this subsection, we carry out a detailed comparison between this method and the traditional OCF method. Since Theorem~\ref{thm:PH} is stated for the high-order system~(\ref{sys3}), whereas the traditional OCF method (Lemma~\ref{lem:ocf}) is formulated for~(\ref{sys11})--(\ref{sys12}), we first need to restate Theorem~\ref{thm:PH} in terms of the Pascal-Hessian condition for~(\ref{sys11})--(\ref{sys12}) before making the comparison.
\begin{theorem}\label{thm:gene}
	Given system~(\ref{sys11})--(\ref{sys12}), there exists a nonlinear diffeomorphism $z=\Phi(x)$ such that the system can be transformed into the observer canonical form (or can achieve observer error linearization) on a region $\mathcal{U}$ if and only if: 
	
	1) the codistribution $\Delta^\perp$ has dimension $n$ on $\mathcal{U}$; 
	
	2) the nonlinear function $p(x)=L_f^n(\chi)\big|_{\chi=\Psi^{-1}(x)}$ satisfies the Pascal-Hessian condition on a region $\Psi(\mathcal{U})$, where $x=\Psi(\chi)$ is defined by (\ref{observerform-trans}).
\end{theorem}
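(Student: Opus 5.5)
The plan is to reduce both directions to Theorem~\ref{thm:PH} through the observer-form coordinates $x=\Psi(\chi)$ of (\ref{observerform-trans}). The bridge is one observation: for any smooth $g$ depending only on $x_1,\ldots,x_{n-1}$, the Lie derivative along $f_p$ equals $Dg$, since the last component $p$ of $f_p$ multiplies $\partial_n g=0$. By Lemma~\ref{lem-1}, $D^{r-1}B(x_1)$ depends only on $x_1,\ldots,x_r$. Hence for every $r\le n-1$, the $r$-th time derivative of $B(x_1)$ along trajectories of (\ref{eq:chain}) is exactly $D^rB(x_1)$, obtained by induction on $r$.

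\emph{Sufficiency.} Condition 1) says the Jacobian of $\Psi$, whose rows are ${\rm d}L_f^{r}h$, has rank $n$ on $\mathcal{U}$. Hence $\Psi$ is a (local) diffeomorphism, and the system becomes (\ref{eq:chain}) with $p=L_f^nh\circ\Psi^{-1}$. By condition 2) and Theorem~\ref{thm:PH}, $p=\sum_{r=0}^{n-1}D^rB_r(x_1)$ with $B_r$ given by (\ref{eq:B0})--(\ref{eq:Br}). The map (\ref{transform1})--(\ref{transform2}) is then a diffeomorphism: each $z_k$ equals $x_{n-k+1}$ plus a function of $x_1,\ldots,x_{n-k}$, because $D^{r-k}B_r$ depends on at most $x_1,\ldots,x_{r-k+1}$ and $r\le n-1$. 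So the map is triangular with unit Jacobian determinant. By the cited result of \cite{Duan2020III} it yields (\ref{sys2}) with $\alpha_i=B_{i-1}$. I would still verify this directly using the bridge observation: $\dot z_k=x_{n-k+2}-\sum_r D^{r-k+1}B_r=z_{k-1}+B_{k-1}$ for $k\ge2$, and $\dot z_1=p-\sum_r D^rB_r+B_0=B_0$. Composing with $\Psi$ gives the required $\Phi$.

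\emph{Necessity.} Suppose $z=\Phi(\chi)$ brings the system to (\ref{sys2}). Condition 1) follows from Lemma~\ref{lem:ocf}; alternatively, (\ref{sys2}) is observable and observability is coordinate invariant. So $\Psi$ is a diffeomorphism and (\ref{eq:chain}) holds. Differentiating $y=z_n$ repeatedly in (\ref{sys2}) gives
\begin{align*}
y^{(r)}=z_{n-r}+\sum_{k=n-r+1}^{n}\frac{{\rm d}^{\,k-n+r-1}}{{\rm d}t^{\,k-n+r-1}}\alpha_k(y),\qquad
y^{(n)}=\sum_{k=1}^{n}\frac{{\rm d}^{\,k-1}}{{\rm d}t^{\,k-1}}\alpha_k(y).
\end{align*}
Since $y^{(r)}=x_{r+1}$ along (\ref{eq:chain}) and all derivative orders here are at most $n-1$, the bridge observation turns the last identity into $p(x(t))=\sum_{r=0}^{n-1}D^r\alpha_{r+1}(x_1)\big|_{x(t)}$ along every trajectory. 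Every point of $\Psi(\mathcal{U})$ is an initial condition, so evaluating at $t=0$ gives this as an identity of functions. Thus $p$ has the fully measured form with $B_r=\alpha_{r+1}$, and the necessity part of Theorem~\ref{thm:PH} gives the Pascal--Hessian condition.

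The main obstacle is the step from trajectory identities to function identities. It needs three things: that $\Psi$ is a diffeomorphism onto $\Psi(\mathcal{U})$, so states in $x$-coordinates are unconstrained; that the iterated time derivatives of $\alpha_k(y)$ never reach order $n$, so $p$ never enters and $D$ may replace ${\rm d}/{\rm d}t$; and care with the local versus global meaning of ``region'', which I would handle by working locally, as Lemma~\ref{lem:ocf} does.
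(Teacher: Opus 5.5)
Your proof is correct. The sufficiency half is the paper's argument: condition 1) gives $\Psi$, Theorem~\ref{thm:PH} gives the fully measured form, and (\ref{transform1})--(\ref{transform2}) gives the OCF. You also write out the triangularity and the $\dot z_k$ computation, which the paper only carries out in Appendix~B.

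Your necessity half takes a genuinely different and shorter route. The paper argues OCF $\Rightarrow$ $[\tau_i,\tau_j]=0$ (Lemma~\ref{lem:ocf}) $\Rightarrow$ Pascal--Hessian (Theorem~\ref{thm:Lie_PH_equiv}). That second step uses the commuting frame rectification theorem (Lemma~\ref{lem:straightening}) to build coordinates $s$ in which $f_p$ has exactly the OCF structure. It then back-substitutes to get $p=\sum_r L_{f_p}^rB_r(x_1)$ and replaces $L_{f_p}^r$ by $D^r$ for $r\le n-1$.

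You notice that the hypothesis already supplies such coordinates, namely $z=\Phi(\chi)$, so you skip the brackets and the rectification entirely. Differentiating $y=z_n$ $n$ times gives $L_f^nh=\sum_k L_f^{k-1}(\alpha_k\circ h)$. The same $L_{f_p}^r=D^r$ step then yields $p=\sum_r D^r\alpha_{r+1}(x_1)$, i.e.\ the fully measured form with $B_r=\alpha_{r+1}$, and the necessity half of Theorem~\ref{thm:PH} finishes the proof.

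What each route buys:
\begin{itemize}
\item Your version is self-contained. It needs neither Lemma~\ref{lem:ocf} (your observability-invariance argument also covers condition 1)) nor Lemma~\ref{lem:straightening}, and it shows directly that the output injections are the $B_r$.
\item The paper's detour produces Theorem~\ref{thm:Lie_PH_equiv} as an independent result. That theorem ties the Pascal--Hessian test to the classical involutivity condition and underlies the paper's claim of full equivalence with the Lie-bracket criterion. It is not needed for the present statement.
\end{itemize}

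The obstacle you flag about passing from trajectory identities to function identities can be dropped. If you phrase the repeated differentiation as iterated Lie derivatives along $f$, using that $f$ written in $z$-coordinates is exactly $Az+\alpha(Cz)$, every identity holds pointwise on $\mathcal{U}$. No appeal to initial conditions is needed.
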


\begin{proof}
	1) Necessity. If there exists a diffeomorphism $z=\Phi(x)$ such that system~(\ref{sys11})--(\ref{sys12}) is transformed into the observer canonical form on $\mathcal{U}$, then by Lemma~\ref{lem:ocf}, condition~1) holds and $[\tau_i,\tau_j]=0$ for all $i,j=1,\ldots,n$, where $\tau_i,~i=1,\ldots,n$ are defined in Lemma~\ref{lem:ocf}. By Theorem~\ref{thm:Lie_PH_equiv} in Appendix~B, $[\tau_i,\tau_j]=0$ is equivalent to the Pascal-Hessian condition, hence necessity is proved.
	
	2) Sufficiency. From the condition ``the codistribution $\Delta^\perp$ has dimension $n$ on $\mathcal{U}$'' we know that the diffeomorphism $x=\Psi(\chi)$ exists, from which $p(x)$ is obtained. Further, by the Pascal-Hessian condition, $p(x)$ admits the fully measured form on $\Psi(\mathcal{U})$, i.e., there exists a family of univariate functions $B_r(x_1),~r=0,\ldots,n-1$ defined by~(\ref{eq:B0})--(\ref{eq:Br}) such that~(\ref{fms}) holds, and consequently the OCF~(\ref{sys2}) can be directly obtained via the method provided by Theorem~\ref{thm:PH}.	
\end{proof}

\begin{table}[!t]
\centering
\caption{Full-process comparison between the Pascal-Hessian matrix criterion method and the traditional OCF necessary and sufficient condition solution method}
\label{tab:full-comparison}
\small
\begin{tabularx}{\textwidth}{@{}p{2.5cm}XX@{}}
\toprule
\textbf{Aspect} & \textbf{Pascal-Hessian matrix method} & \textbf{OCF necessary and sufficient condition and solution method} \\
\midrule
Necessary and sufficient condition
&
1) $\dim\Delta^{\perp}=n$\newline
\quad\textbullet\ Compute $L_f h, \ldots, L_f^{(n-1)} h$\newline
2) Pascal-Hessian condition\newline
\quad\textbullet\ Solve $p(x) = L_f^n h(\chi)\vert_{\chi=\Psi^{-1}(x)}$, requiring $n$ derivatives, 1 inner product, 1 nonlinear transformation\newline
\quad\textbullet\ Compute the Hessian matrix of $p(x)$, requiring ${n^2}$ derivatives in total
&
1) $\dim\Delta^{\perp}=n$\newline
\quad\textbullet\ Compute $L_f h, \ldots, L_f^{(n-1)} h$\newline
2) $[\tau_i, \tau_j] = 0,\; i,j=1,\ldots,n$\newline
\quad\textbullet\ Solve the linear system~(\ref{ocf})\newline
\quad\textbullet\ Compute Lie brackets $\tau_k = [\tau_{k-1}, f],\; k=2,\ldots,n$, requiring ${2(n-1)n^2}$ derivatives and ${2(n-1)}$ inner products\newline
\quad\textbullet\ Compute $[\tau_i, \tau_j] = 0,\; i,j=1,\ldots,n$, requiring ${n(n-1) \times n^2}$ derivatives and ${n(n-1)}$ inner products
\\[4pt]
\midrule
Solving OCF
&
1) Compute $n$ indefinite integral~(\ref{eq:B0})--(\ref{eq:Br}).\newline
2) Substitute $a_i(y)=B_{i-1}(y)$ into formula~(\ref{sys2}) for $i=1,\ldots,n$.\newline
\quad\textbullet\ Compute $n$ indefinite integrals.
&
1) Solve the partial differential equations
\[
\frac{\partial \mathcal{F}}{\partial z^\top} = [\tau_1, \ldots, \tau_n]\vert_{\chi=\mathcal{F}(z)}.
\]\newline
2) Find the inverse transformation $z = \Phi(x)$ of $x = \mathcal{F}(z)$.\newline
3) Compute $\frac{\partial \Phi}{\partial x^\top} f(x)\vert_{x=\mathcal{F}(z)}$ and substitute it into equation (\ref{sys2}).\newline
\quad\textbullet\ Solve a system of partial differential equations; Perform one inverse transformation; Perform one nonlinear coordinate transformation.
\\[4pt]
\midrule
Additional cost
&
1) Requires an extra $L_f^n h(\chi)$ ($n$ additional differentiations and one additional inner products).\newline
2) An extra inverse ${\Psi'}^{-1}$ (obtained by trivial back-substitution), resulting in comparable inversion difficulty than the traditional approach.
&
1) An $O(n^4)$ additional differentiations and $O(n^2)$ additional inner products.\newline
2) One extra PDEs solution.
\\
\bottomrule
\end{tabularx}

\smallskip
{\footnotesize Note: The common prerequisites ``$\dim\Delta^{\perp}=n$'' and ``computing $L_f h,\ldots,L_f^{(n-1)}h$'' are retained in both columns for ease of item-by-item comparison.}
\end{table}

In Table~\ref{tab:full-comparison}, we give a item-by-item comparison between Pascal-Hessian condition and traditional OCF theories, from which the advantages of the Pascal-Hessian method are clearly evident.

\textbf{Verification stage.} The traditional OCF method requires sequentially solving a linear system to obtain $\tau_1$, recursively computing $n-1$ Lie brackets $\tau_k=[\tau_{k-1},f]$ (each involving an $n\times n$ Jacobian matrix), and then verifying involutivity for $\binom{n-1}{2}$ pairs of Lie brackets. The total number of derivative operations for condition verification alone reaches $2(n-1)n^2+n(n-1)n^2\approx O(n^4)$, in addition to $O(n^2)$ inner product operations. In contrast, the Pascal-Hessian method only requires solving $p(x)=L_f^nh|_{\chi=\Psi^{-1}(x)}$ and computing its Hessian matrix (an $n\times n$ symmetric matrix), then verifying that its lower-right corner is zero and its upper-left coefficients form Pascal's triangle. The total number of derivative operations is merely $O(n^2)$. The complexity is reduced from $O(n^4)$ to $O(n^2)$, with the difference being particularly pronounced for $n\geq 4$.

\textbf{Construction stage.} The traditional OCF method requires solving $n-1$ coupled first-order PDEs via the Frobenius theorem, which involves cumbersome symbolic operations. The Pascal-Hessian method directly provides the definite integral formulas for $B_r(x_1)$ (Theorem~\ref{thm:PH}). Then, the output injection functions $\alpha_i(y)$ can be obtained by $\alpha_i(y)=B_{i-1}(y)$.


\textbf{Additional cost.} It should be clarified that computing $p(x)$ requires the Lie derivative sequence $L_f h,\ldots,L_f^n h$, whereas verifying condition ($\dim\Delta^\perp=n$) in the traditional approach requires up to $L_f^{n-1}h$. Thus, the additional cost of our method in the preliminary stage consists of two parts: one extra Lie derivative $L_f^n h$, and one nonlinear inverse transformation $\chi=\Psi^{-1}(x)$. For the former, its computational cost is merely $n$ additional derivative and $1$ inner product, which is negligible compared with the subsequent verification of $\binom{n-1}{2}$ pairs of Lie bracket involutivity conditions (involving $O(n^4)$ Lie derivative operations). For the latter, the traditional approach also faces the inversion $\chi=\Phi^{-1}(z)$ at the final stage of OCF construction, whose difficulty is equivalent to that of $\chi=\Psi^{-1}(x)$, and thus these two costs cancel each other. Moreover, the additional coordinate transformation $z=\Psi'(x)$ from the observer form $x$ to the OCF coordinates $z$ (Eqs.~(\ref{transform1})--(\ref{transform2})) has an upper-triangular structure, whose inverse can be obtained explicitly by back-substitution with very little extra effort. In summary, even when the preliminary and subsequent computations are taken together, the overall complexity of our method remains significantly lower than that of the traditional approach.

Summarizing the above two stages, the Pascal-Hessian method reduces the computational cost of condition verification from $O(n^4)$ to $O(n^2)$, and provides explicit integral formulas for $B_r(x_1)$ to avoid solving PDEs.


In the Pascal-Hessian matrix method proposed in this paper, every step from condition verification to the construction of the output injection functions $\alpha(y)$ corresponds to an explicit formula: the condition reduces to checking the Pascal triangle structure of the Hessian matrix; $B_r(x_1)$ is directly obtained from the indefinite integral~(\ref{eq:B0})--(\ref{eq:Br}); and the coordinate transformation is directly written out from~(\ref{transform1})--(\ref{transform2}). These enable the verification and implementation of observer error linearization for high-dimensional nonlinear systems to be carried out following a rigorous formulaic procedure.

\section{Extension of the Pascal-Hessian Condition to a Class of Special Multi-Output Settings}\label{sec4}

This section considers the extension of the Pascal-Hessian condition to the multi-output setting. We assume that the observability indices of all outputs are equal. For the general nonlinear system~(\ref{sys11}), consider the output function
\begin{align}
	\bm{y}=\bm{h}(\chi)=\col\{h_1(\chi),\ldots,h_\omega(\chi)\},~y_i=h_i(\chi),\label{sys13}
\end{align}
where $\bm{y}\in\mathbb{R}^\omega$ is the $\omega$-dimensional measurement output. We assume that under the generalized priority search condition, all outputs have equal observability indices, i.e., the codistribution spanned by the following covector fields
\begin{align}\label{mspan}
	\begin{matrix}
		{\rm d}h_1(\chi)&{\rm d}h_2(\chi)&\cdots&{\rm d}h_\omega(\chi)\\
		{\rm d}L_fh_1(\chi)&{\rm d}L_fh_2(\chi)&\cdots&{\rm d}L_fh_\omega(\chi)\\
		\vdots&\vdots&\ddots&\vdots\\
		{\rm d}L_f^{v-1}h_1(\chi)&{\rm d}L_f^{v-1}h_2(\chi)&\cdots&{\rm d}L_f^{v-1}h_\omega(\chi)\\
	\end{matrix}
\end{align}
has dimension $n$ in the region $\mathcal{U}$, where the observability index $v=n/\omega$. Taking the coordinate transformation $x=\Psi(\chi)$, or more concretely $x_{s_i+j}=L_f^{j}h_i(\chi)$, $i=1,\ldots,\omega$, $j=0,\ldots,v-1$ with $s_i=(v-1)i+1$, we obtain the observer form (written directly as a high-order system)
\begin{align}
	\bm{x}_1^{(v)}=\bm{p}(\bm{x}),~\bm{y}=\bm{x}_1,\label{fms:mo}
\end{align}
where $\bm{p}(\bm{x})=\col\{p_1(\bm{x}),\ldots,p_\omega(\bm{x})\}$, $\bm{x}=\col\{\bm{x}_1,\ldots,\bm{x}_v\}$, and $\bm{x}_j=\col\{x_{s_1+j-1},\ldots,x_{s_\omega+j-1}\}$ for all $j=1,\ldots,v$, . Herein, $s_i$ is the index given by $y_i=x_{s_i}$ with $\bm{y}=\col\{y_1,\ldots,y_\omega\}$. Analogous to the single-output case, we define the total derivative operator for the multi-output case as
\begin{align}
	\bm{D}=\sum_{\ell=1}^{v-1}\bm{x}_{\ell+1}\frac{\partial}{\partial \bm{x}_{\ell}}=\sum_{k=1}^\omega\sum_{\ell=1}^{v-1}x_{s_k+\ell}\frac{\partial}{\partial x_{s_k+\ell-1}}.
\end{align}
We then have the following lemma.
\begin{lemma}\label{lem:D-multi}
	Let $F(\bm{x}_1)$ be a smooth function of $\omega$ variables. For any $m>0$ and $1\leq i\leq n$, we have
	\begin{align}
		\partial_i\bm{D}^mF(\bm{x}_1)=\binom{m}{\hat{i}}\bm{D}^{m-\hat{i}}\frac{\partial F(\bm{x}_1)}{\partial x_{s_{a(i)}}},
	\end{align}
	where $a(i)$ denotes the output index corresponding to state $x_i$, defined as $a(i)=\lceil i/v\rceil$, and the offset $\hat{i}=i-s_{a(i)}$. When $\hat{i}>m$, we adopt the convention that $\binom{m}{\hat{i}}=0$ and $\bm{D}^{m-\hat{i}}=0$. Similarly, for any $1\leq i,j\leq n$, we have
	\begin{align}\label{2order}
		\partial_i\partial_j\bm{D}^mF(\bm{x}_1)=\binom{m}{\hat{i}}\binom{m-\hat{i}}{\hat{j}}\bm{D}^{m-\hat{i}-\hat{j}}\frac{\partial^2 F(\bm{x}_1)}{\partial x_{s_{a(i)}}\partial x_{s_{a(j)}}}.
	\end{align}
\end{lemma}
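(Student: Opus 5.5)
The plan is to mirror the proof of Lemma~\ref{lem-D}, replacing the single chain $x_1\to x_2\to\cdots\to x_n$ by $\omega$ parallel chains, one per output. The structural fact needed first is a commutator relation. The coefficient vector of $\bm{D}$ has entry $x_{s_k+\ell}$ in position $s_k+\ell-1$ for $\ell=1,\ldots,v-1$, and zero in the last position of each chain. A direct use of the Lie bracket formula~(\ref{Lie-bracket}), as in the single-output case, then gives the following relations. For $\hat{i}\geq 1$ we have $[\partial_i,\bm{D}]=\partial_{i-1}$, where $i-1$ lies in the same chain $a(i)$. For $\hat{i}=0$, i.e.\ $i=s_{a(i)}$ is the head of a chain, we have $[\partial_i,\bm{D}]=0$. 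I will write $\partial_{i-k}$ for the index with offset $\hat{i}-k$ in chain $a(i)$, and set it to $0$ when $k>\hat{i}$. This convention keeps the shift from crossing into a neighbouring output's block.

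With this relation, the operator identity
\begin{align*}
\partial_i\bm{D}^m=\sum_{k=0}^{m}\binom{m}{k}\bm{D}^{m-k}\partial_{i-k}
\end{align*}
follows by the same induction on $m$ used for~(\ref{binom}). The only ingredients are $\partial_{i-k}\bm{D}=\bm{D}\partial_{i-k}+\partial_{i-k-1}$ and Pascal's rule, and neither depends on the number of chains. Applying the identity to $F(\bm{x}_1)$, we get $\partial_{i-k}F(\bm{x}_1)=0$ unless the index $i-k$ is a head, i.e.\ $k=\hat{i}$, since $F$ depends only on the head variables $x_{s_1},\ldots,x_{s_\omega}$. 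Only the term $k=\hat{i}$ survives, which gives the first formula when $\hat{i}\le m$.

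When $\hat{i}>m$, I need a multi-output analogue of Lemma~\ref{lem-1}: $\bm{D}^mF(\bm{x}_1)$ depends only on $\bm{x}_1,\ldots,\bm{x}_{m+1}$, i.e.\ on variables of offset at most $m$. This follows by the same induction, because $\bm{D}$ raises the offset by exactly one within each chain. Hence $\partial_i\bm{D}^mF=0$, which matches the convention. The operator identity also gives this directly, since then every $k\le m<\hat{i}$ yields a non-head index.

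For the second-order formula~(\ref{2order}), apply $\partial_j$ to the first formula. This means applying the first-order result to $G=\partial F/\partial x_{s_{a(i)}}$, which is again a function of $\bm{x}_1$ alone, with $m$ replaced by $m-\hat{i}$. The factor $\binom{m}{\hat{i}}$ is constant. The result is $\binom{m-\hat{i}}{\hat{j}}\bm{D}^{m-\hat{i}-\hat{j}}\partial_{s_{a(j)}}\partial_{s_{a(i)}}F$. The degenerate cases $\hat{i}>m$ or $\hat{j}>m-\hat{i}$ are absorbed by the zero conventions.

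The main obstacle is bookkeeping rather than any new idea. The statement's indexing $s_i=(v-1)i+1$ is inconsistent with $a(i)=\lceil i/v\rceil$; for the chains to have length $v$, one needs $s_i=(i-1)v+1$. I would fix this convention explicitly at the outset, so that $\hat{i}\in\{0,\ldots,v-1\}$ is well defined and the commutator $[\partial_i,\bm{D}]$ never links the last state of one chain to the head of the next. Once the commutator relation is verified carefully at the chain boundaries, the rest is a verbatim transcription of Lemmas~\ref{lem-1} and~\ref{lem-D}.
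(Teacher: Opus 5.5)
Your proposal is correct and is exactly the argument the paper intends when it omits this proof as ``very similar to that of Lemma~\ref{lem-D}''. The steps are the same: the chain-respecting commutator $[\partial_i,\bm{D}]=\partial_{i-1}$ (and $0$ at a chain head), the binomial operator identity by induction on $m$, collapse onto the head term $k=\hat{i}$, and the second-order formula by reapplying the first-order one to $\partial F/\partial x_{s_{a(i)}}$. Your correction $s_i=(i-1)v+1$ is also right and needed: it is the convention the paper actually uses in its Fig.~\ref{yanghui2} example ($s_2=4$, $s_3=7$ for $v=3$), whereas the printed $s_i=(v-1)i+1$ would make consecutive chains overlap.
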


The proof of this lemma is very similar to that of Lemma~\ref{lem-D} and is omitted. Next, we present the definition of the fully measured system for the multi-output case and the extended form of the Pascal-Hessian condition.
\begin{definition}[Fully measured system]\label{FMS-MO}
	We call the high-order system~(\ref{fms:mo}) fully measured if and only if there exists a family of smooth vector-valued functions $\bm{B}_r(\bm{x}_1)=\col\{B_{1,r}(\bm{x}_1),\ldots,B_{\omega,r}(\bm{x}_1)\}$ such that
	\begin{align}
		\bm{p}(\bm{x})=\sum_{r=0}^{v-1}\bm{B}_{r}^{(r)}(\bm{x}_1)=\sum_{r=0}^{v-1}\bm{D}^r\bm{B}_{r}(\bm{x}_1).
	\end{align}
\end{definition}

\begin{definition}[Pascal-Hessian Condition for Multi-Output Systems]\label{PH:MO}
	The vector-valued function $\bm{p}(\bm{x})$ is said to satisfy the Pascal-Hessian condition if and only if for each $k=1,\ldots,\omega$, the Hessian matrix of $p_k(\bm{x})$ can be partitioned into $\omega\times\omega$ blocks, each of dimension $v\times v$, and within each block, the Pascal triangle structure described in Definition~\ref{def:PH} holds. Concretely, for all $1\leq i,j\leq n$,
	\begin{align}
		\frac{\partial^2 p_k}{\partial x_i \partial x_j} =
		\begin{cases}
			\displaystyle \binom{\hat{i}+\hat{j}}{\hat{i}} \cdot \frac{\partial^2 p_k}{\partial x_{s_{a(i)}} \partial x_{s_{a(j)}+\hat{i}+\hat{j}}}, & \hat{i}+\hat{j}\leq v-1,\\[1.5em]
			0, & \hat{i}+\hat{j}>v-1.
		\end{cases}
	\end{align}
\end{definition}

\begin{figure}[!t]
	\centering
	\includegraphics[width=15cm]{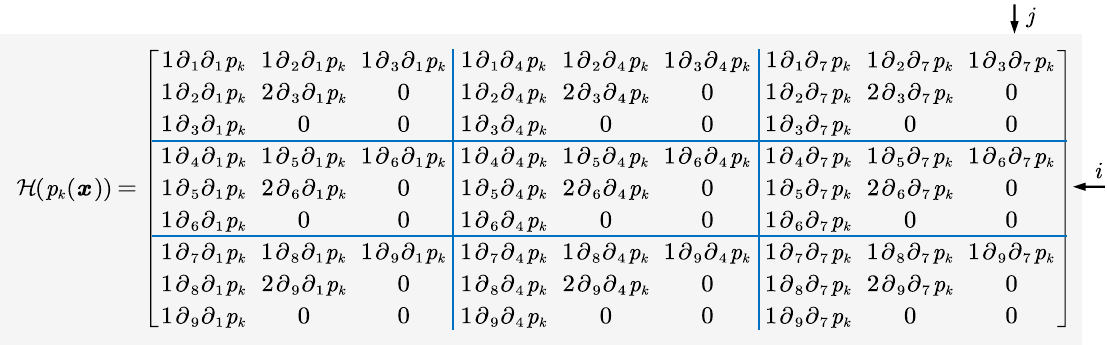}\\
	\caption{Illustration of the multi-output Pascal-Hessian condition.}\label{yanghui2}
\end{figure}

Fig.~\ref{yanghui2} shows an example of $p_k(\bm{x})$ for a system with $9$ states and $3$ outputs (each output having observability index $3$). As can be seen, the entire Hessian matrix can be partitioned into $9$ blocks, each possessing a Pascal triangle structure. We have marked row $i=5$ and column $j=9$ in the figure; we can compute $a(i)=\lceil 5/3\rceil=2$, $a(j)=\lceil 9/3\rceil=3$, hence $\hat{i}=i-s_{a(i)}=1$, $\hat{j}=j-s_{a(j)}=2$; since $\hat{i}+\hat{j}=3>2$, the corresponding entry of the Hessian matrix is $0$.

The following theorem gives the necessary and sufficient conditions for this class of special multi‑output systems to achieve observer error dynamics linearization, as well as the method for constructing smooth vector-valued functions $\bm{B}_r(\bm{x}_1)$.

\begin{theorem}\label{thm:PHm}
	Given the high-order system~(\ref{fms:mo}), let $\bm{x}_{1,j}$ denote the vector obtained by leaving the $1$st--$j$th component of $\bm{x}_1$ unchanged and replacing the other components by zeros. Then there exists a family of smooth vector-valued functions
	\begin{align}
	&\bm{B}_0(\bm{x}_1)=\bm{p}(\bm{x}_1,0,\ldots,0),\label{eq:B0m}\\
	&B_{k,r}(\bm{x}_1)=\sum_{j=1}^{\omega}\int_0^{x_{s_j}}\frac{\partial}{\partial x_{s_j+r}}p_k\left(\bm{x}_{1,j},0,\ldots,0\right)\big|_{x_{s_j}=s}{\rm d}s,\notag\\
	&\quad\qquad\qquad\qquad\qquad\qquad\qquad\qquad\qquad~k=1,\ldots,\omega\label{eq:Brm}
\end{align}
	such that the high-order system~(\ref{fms:mo}) can admit the multi-output high-order fully measured form on $\Psi(\mathcal{U})$ and can consequently be transformed into the observer canonical form (the multi-output observer canonical form and its necessary and sufficient condition in the classical OCF theory are given in Appendix~\ref{app3}) if and only if $\bm{p}(\bm{x})$ satisfies the multi-output Pascal-Hessian condition on $\Psi(\mathcal{U})$.
\end{theorem}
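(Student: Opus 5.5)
We follow the proof of Theorem~\ref{thm:PH} block by block, with Lemma~\ref{lem:D-multi} in place of Lemma~\ref{lem-D}.

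\emph{Necessity.} Suppose $\bm{p}=\sum_{r=0}^{v-1}\bm{D}^r\bm{B}_r(\bm{x}_1)$ and fix a component $p_k$. By (\ref{2order}),
\[
\partial_i\partial_j\bm{D}^mB_{k,m}=\binom{m}{\hat i}\binom{m-\hat i}{\hat j}\bm{D}^{m-\hat i-\hat j}\,\frac{\partial^2 B_{k,m}}{\partial x_{s_{a(i)}}\partial x_{s_{a(j)}}}.
\]
Take the reference index $j'=s_{a(j)}+\hat i+\hat j$, so that $\hat{j'}=\hat i+\hat j$ and $a(j')=a(j)$. Lemma~\ref{lem:D-multi} applied to the pair $(s_{a(i)},j')$ gives $\binom{m}{\hat i+\hat j}$ times the same expression. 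The absorption identity $\binom{m}{a}\binom{m-a}{b}=\binom{m}{a+b}\binom{a+b}{a}$ then produces the factor $\binom{\hat i+\hat j}{\hat i}$. When $\hat i+\hat j>v-1\ge m$ the binomial $\binom{m}{\hat i+\hat j}$ vanishes. Summing over $m$ gives the multi-output Pascal-Hessian condition. Each summand $\bm{D}^r\bm{B}_r$ therefore satisfies the condition, and so does any linear combination.

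\emph{Sufficiency.} Set $\bm{q}=\bm{p}-\sum_r\bm{D}^r\bm{B}_r$ with $\bm{B}_r$ from (\ref{eq:B0m})--(\ref{eq:Brm}), and let $\Sigma=\{\bm{x}_2=\cdots=\bm{x}_v=0\}$. On $\Sigma$ the operator $\bm{D}$ vanishes, so $\bm{q}|_\Sigma=0$. For $\hat i=r\ge1$, Lemma~\ref{lem:D-multi} leaves only the term $m=r$ on $\Sigma$, giving $\partial_i\bm{D}^r B_{k,r}|_\Sigma=\partial B_{k,r}/\partial x_{s_{a(i)}}$.

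The key new step is to check that $\partial B_{k,r}/\partial x_{s_l}=\partial p_k/\partial x_{s_l+r}$ on $\Sigma$ for every $l$. Formula (\ref{eq:Brm}) is the line integral of the 1-form $\sum_l g_l\,{\rm d}x_{s_l}$, where $g_l(\bm{x}_1)=\partial_{s_l+r}p_k(\bm{x}_1,0,\ldots,0)$, taken along the coordinate staircase path from $0$ to $\bm{x}_1$. Hence the required identity holds exactly when this form is closed, i.e.\ $\partial_{s_m}g_l=\partial_{s_l}g_m$ on $\Sigma$. This closedness is the part I expect to be the main obstacle. It should come from the Pascal-Hessian condition with $\hat i=0$, $\hat j=r$, which reads $\partial_{s_l+r}\partial_{s_m}p_k=\binom{r}{0}\partial_{s_l}\partial_{s_m+r}p_k$ read with the reference taken in the other block, combined with symmetry of the Hessian. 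I would verify carefully that the reference convention in Definition~\ref{PH:MO} indeed gives this symmetric exchange between blocks $l$ and $m$. The domain also needs attention: I would assume $\Psi(\mathcal{U})$ contains the staircase paths, or restrict to a star-shaped neighbourhood, so that the Poincar\'e-lemma argument applies.

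Once $\partial_i q_k|_\Sigma=0$ for all $i$ with $\hat i\ge1$, run a descending induction on the offset $\hat i=v-1,\ldots,1$. At each stage, show that $\partial_i q_k$ has zero derivative with respect to every $x_j$ with $\hat j\ge1$. If $\hat i+\hat j>v-1$, the Pascal-Hessian condition for $q_k$ gives this directly. Otherwise the condition reduces $\partial_i\partial_j q_k$ to a multiple of $\partial_{s_{a(i)}}\partial_{j'}q_k$ with offset $\hat{j'}=\hat i+\hat j>\hat i$, and $\partial_{j'}q_k$ is already identically zero by the induction hypothesis. So $\partial_i q_k$ depends only on $\bm{x}_1$, and since it vanishes on $\Sigma$, it vanishes identically. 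At the end $q_k$ depends only on $\bm{x}_1$ and vanishes on $\Sigma$, so $q_k\equiv0$.

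The conclusion about the observer canonical form then follows from the multi-output analogue of (\ref{transform1})--(\ref{transform2}), applied componentwise (see Appendix~\ref{app3}).
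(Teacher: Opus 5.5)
Your proposal is correct and follows the paper's proof essentially step for step: necessity via (\ref{2order}) and the absorption identity, and sufficiency via the residual $q_k$, the section $\Sigma$, a curl-free check that makes (\ref{eq:Brm}) a potential, and then descending induction on the offset. The closedness step you flag is exactly what the paper verifies, but it comes from the instance $(\hat i,\hat j)=(r,0)$ of Definition~\ref{PH:MO}, namely $\partial_{s_l+r}\partial_{s_m}p_k=\binom{r}{r}\partial_{s_l}\partial_{s_m+r}p_k$; under the paper's reference convention the $(0,r)$ instance you cite is a tautology, and your staircase line-integral reading of (\ref{eq:Brm}) together with the domain caveat is more explicit than the paper's bare appeal to the Poincar\'e lemma.
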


\begin{proof}
	1) Necessity. Suppose $\bm{p}(\bm{x})$ has the FMS form~(\ref{FMS-MO}). For any $k$ and any $i,j$, applying formula~(\ref{2order}) from Lemma~\ref{lem:D-multi} term-by-term to $p_k=\sum_r\bm{D}^r B_{k,r}$, and using the binomial identity $\binom{r}{\hat{\jmath}}\binom{r-\hat{\jmath}}{\hat{\imath}}=\binom{\hat{\imath}+\hat{\jmath}}{\hat{\imath}}\binom{r}{\hat{\imath}+\hat{\jmath}}$, one derives the Pascal proportionality relations in Definition~\ref{PH:MO}. The method is completely parallel to the necessity proof of Theorem~\ref{thm:PH}.
	
	2) Sufficiency. First, for each $k=1,\ldots,\omega$, define the residual function
	\begin{align}
		q_k(\bm{x})=p_k(\bm{x})-\sum_{r=0}^{v-1}\bm{D}^r B_{k,r}(\bm{x}_1),
	\end{align}
	and the section $\Sigma=\{\bm{x}\big| x_i=0,~\forall i\notin \{s_1,\ldots,s_\omega\}\}$. We then prove two claims: (i) $q_k(\bm{x})\big|_\Sigma=0$; and (ii) for all $i=1,\ldots,\omega$ and $r=1,\ldots,v-1$,
	\begin{align}
		\left.\frac{\partial q_k(\bm{x})}{\partial x_{s_i+r}}\right|_{\Sigma}=0.
	\end{align}
	The proof of (i) can be found in the corresponding part of the sufficiency proof of Theorem~\ref{thm:PH}. We now prove (ii). By Lemma~\ref{lem:D-multi}, we obtain
	\begin{align}
		\frac{\partial}{\partial x_{s_i+r}}\bm{D}^mB_{k,m}(\bm{x}_1)=\binom{m}{r}\bm{D}^{m-r}\frac{\partial}{\partial x_{s_i}}B_{k,m}(\bm{x}_1).
	\end{align}
	When $m<r$, the right-hand side is $0$; when $m>r$, since $m-r\neq0$, every term in $\bm{D}^{m-r}\frac{\partial}{\partial x_{s_i}}B_{k,m}(\bm{x}_1)$ contains at least one factor among $\bm{x}_2,\ldots,\bm{x}_n$, hence $\bm{D}^{m-r}\frac{\partial}{\partial x_{s_i}}B_{k,m}(\bm{x}_1)\big|_\Sigma=0$. Thus, only when $m=r$, $\frac{\partial}{\partial x_{s_i+r}}\bm{D}^mB_{k,m}(\bm{x}_1)$ is not zero and gives rise to
	\begin{align}
		\frac{\partial}{\partial x_{s_i+r}}\bm{D}^rB_{k,r}(\bm{x}_1)=\frac{\partial}{\partial x_{s_i}}B_{k,r}(\bm{x}_1)\neq 0.
	\end{align}
	Consequently, from (\ref{eq:B0m})--(\ref{eq:Brm}) we obtain
	\begin{align}
		\left.\frac{\partial q_k(\bm{x})}{\partial x_{s_i+r}}\right|_{\Sigma}
		=\left.\frac{\partial p_k(\bm{x})}{\partial x_{s_i+r}}\right|_{\Sigma}
		-\frac{\partial}{\partial x_{s_i}}B_{k,r}(\bm{x}_1).
	\end{align}
	Thus, proving claim~(ii) is equivalent to verifying that $B_{k,r}(\bm{x}_1)$ defined in~(\ref{eq:Brm}) satisfies the gradient relation
	\begin{align}
		\frac{\partial B_{k,r}}{\partial x_{s_i}}(\bm{x}_1)
		=\left.\frac{\partial p_k}{\partial x_{s_i+r}}\right|_\Sigma,\qquad i=1,\ldots,\omega. \label{gradient}
	\end{align}
	The section $\Sigma$ is diffeomorphic to $\mathbb{R}^\omega$, hence simply connected. By the Poincar\'e lemma, the vector field on the right-hand side of~(\ref{gradient}) is a gradient field if and only if it is curl-free, i.e., for all $i,j=1,\ldots,\omega$,
	\begin{align}
		\frac{\partial}{\partial x_{s_j}}\!\left(\frac{\partial p_k}{\partial x_{s_i+r}}\Big|_\Sigma\right)
		=\frac{\partial}{\partial x_{s_i}}\!\left(\frac{\partial p_k}{\partial x_{s_j+r}}\Big|_\Sigma\right). \label{Poincare}
	\end{align}
	We now verify~(\ref{Poincare}) using the multi-output Pascal-Hessian condition. The left-hand side equals $\frac{\partial^2 p_k}{\partial x_{s_j}\partial x_{s_i+r}}\big|_\Sigma$, which by Clairaut's theorem is $\frac{\partial^2 p_k}{\partial x_{s_i+r}\partial x_{s_j}}\big|_\Sigma$. This Hessian entry lies in block $(i,j)$ with $(\hat{i},\hat{j})=(r,0)$. Since $\hat{i}+\hat{j}=r\leq v-1$, the Pascal-Hessian condition~(\ref{PH:MO}) gives
	\[
	\frac{\partial^2 p_k}{\partial x_{s_i+r}\partial x_{s_j}}
	=\binom{r}{r}\,\frac{\partial^2 p_k}{\partial x_{s_i}\partial x_{s_j+r}}
	=\frac{\partial^2 p_k}{\partial x_{s_i}\partial x_{s_j+r}},
	\]
	which is precisely the right-hand side of~(\ref{Poincare}). Hence~(\ref{Poincare}) holds, the curl-free condition is satisfied, and claim~(ii) follows from the Poincar\'e lemma.
	
	Based on claims (i)--(ii), and following the descending induction argument in the sufficiency proof of Theorem~\ref{thm:PH}, it can be proved that $q_k(\bm{x}_1)\equiv0$. This completes the sufficiency proof.
\end{proof}

As in the single-output case, Theorem~\ref{thm:PHm} provides both verification and construction methods. For verification, one computes the Hessian matrix of each $p_k(\bm{x})$, partitions it into $\omega\times\omega$ blocks of size $v\times v$ according to Definition~\ref{PH:MO}, and checks the Pascal triangle structure block by block. For construction, $\bm{B}_r(\bm{x}_1)$ is directly obtained from the indefinite integral~(\ref{eq:B0m})--(\ref{eq:Brm}) on the section $\Sigma$; following the single-output procedure yields the OCF coordinate transformation and output injection functions.

\begin{remark}
This paper only discusses the case where the observability indices of all outputs are equal ($v=n/\omega$). This case is very common in engineering: for example, in interconnected microgrid systems, the dynamic orders of each generation unit are the same; in multi-UAV/multi-vehicle formations, the kinematic models of homogeneous platforms are identical; in coupled Van~der~Pol oscillator networks, the state dimension of each oscillator is equal. The results of this section can be directly applied to all such systems. 
\end{remark}
\begin{remark}
	Extending the proposed method to multi-output systems with unequal observability indices is a meaningful direction for future research. The unequal orders of Lie derivatives across different output channels, caused by the disparity in observability indices, render the block structure of the Hessian matrix non-uniform, which constitutes the main technical difficulty in this generalization. We are currently conducting systematic investigations on viable approaches to this extension, and preliminary explorations have shown promising prospects. The corresponding results will be reported elsewhere in due course.
\end{remark}
\section{Computational Examples}\label{sec:example}
In this section, two examples are provided from complementary perspectives. Subsection \ref{sec5.1} uses a fourth‑order numerical system to illustrate the single‑output procedure step by step and compares it with the traditional approach. Subsection \ref{sec5.2} then applies the multi‑output extension to a spacecraft relative‑motion system.

\subsection{Application Example of the Pascal-Hessian Matrix Method on a Numerical System}\label{sec5.1}

Consider the following fourth-order nonlinear system
	\begin{align}
		\dot{x}_1&=x_2,\label{eq:ex-ss1}\\
		\dot{x}_2&=x_3,\\
		\dot{x}_3&=x_4,\\
		\dot{x}_4&=x_1x_3+x_1x_4+x_2^2+3x_2x_3+x_1^3,\label{eq:ex-ss2}\\
		y&=x_1.
	\end{align}
This system is already in the observer form~(\ref{eq:chain}), with $p(x)=x_1x_3+x_1x_4+x_2^2+3x_2x_3+x_1^3$ and $h(x)=x_1$. Below, we solve for its OCF using both the Frobenius integral method and the Pascal-Hessian matrix method.

\textbf{Frobenius integral method.}
Solving ${\rm d}L_f^{r}h\cdot\tau_1=\delta_{r,3}$ yields $\tau_1=\col\{0,0,0,1\}$. Using the recursion $\tau_{k+1}=[\tau_k,f]$ together with $f_p(x)=\col\{x_2,x_3,x_4,p(x)\}$ and $\partial f_p/\partial x$, successive computation yields
\begin{align*}
	\tau_1&=\col\{0,\;0,\;0,\;1\},\\
	\tau_2&=\col\{0,\;0,\;1,\;x_1\},\\
	\tau_3&=\col\{0,\;0,\;x_1,\;x_1+2x_2+x_1^2\},\\
	\tau_4&=\col\{0,\;0,\;x_1+x_2+x_1^2,\notag\\
	&\qquad\qquad\qquad2x_1^2+3x_1x_2+x_1^3-x_2-2x_3\}.
\end{align*}
Pairwise verification gives $[\tau_i,\tau_j]=0$ for $i,j=1,\ldots,4$; the involutivity condition is satisfied. According to Appendix~\ref{app1}, one must solve the PDE
\begin{equation}\label{eq:ex-PDE}
	\frac{\partial\mathcal{F}}{\partial z^\top}=[\tau_1,\tau_2,\tau_3,\tau_4]\big|_{x=\mathcal{F}(z)}.
\end{equation}
Solving this PDE depends critically on the concrete expressions of the $\tau_k$---the nonzero entries of the right-hand side matrix involve $x_1,x_2,x_3$ as well as cross terms such as $x_1^2,x_1x_2,x_1^3$; the characteristic equations couple the integration paths of $\mathcal{F}_3,\mathcal{F}_4$, and changing $p(x)$ requires re-deriving a new set of characteristic equations, with no universal closed-form formula independent of the system. After obtaining $\mathcal{F}$, one must further invert $x=\mathcal{F}(z)$. Every step is anchored to the specific form of the $\tau_k$, in contrast to the Pascal-Hessian method where all steps proceed from $p(x)$ directly to $B_r(\cdot)$ and then to the OCF via fixed formulas.

\textbf{Pascal-Hessian matrix method.}
From $p(x)=x_1x_3+x_1x_4+x_2^2+3x_2x_3+x_1^3$, compute the second-order partial derivatives directly:
\begin{align}
	\partial_1\partial_1p=6x_1,\;\;
	\partial_1\partial_2p=0,\;\;
	\partial_1\partial_3p=1,\;\;
	\partial_1\partial_4p=1,\notag\\
	\partial_2\partial_2p=2,\;\;
	\partial_2\partial_3p=3,\;\;
	\partial_2\partial_4p=0,\notag\\
	\partial_3\partial_3p=0,\;\;
	\partial_3\partial_4p=0,\;\;
	\partial_4\partial_4p=0.\notag
\end{align}
Substituting into the Hessian matrix gives
\begin{equation}\label{eq:ex-H}
	\mathcal{H}(p)=\begin{bmatrix}
		6x_1 & 0 & 1 & 1\\
		0    & 2 & 3 & 0\\
		1    & 3 & 0 & 0\\
		1    & 0 & 0 & 0
	\end{bmatrix}.
\end{equation}
The lower-right of the anti-diagonal is identically zero, and the upper-left corner satisfies $\mathcal{H}_{22}=2=2\mathcal{H}_{13}$, $\mathcal{H}_{23}=3=3\mathcal{H}_{14}$; the Pascal-Hessian condition holds. The contrast is clear: computing the Pascal-Hessian condition is far simpler than computing numerous Lie brackets to verify the involutivity condition $[\tau_i,\tau_j]=0$.

Furthermore, by Theorem~\ref{thm:PH}, the $B_r$ are obtained by integration on the section $\Sigma$:
$B_0(x_1)=x_1^3$, $B_1(x_1)=0$, $B_2(x_1)=B_3(x_1)=x_1^2/2$.
Using~(\ref{transform1})--(\ref{transform2}), the OCF coordinate transformation (with $D=x_2\partial_1+x_3\partial_2+x_4\partial_3$) is:
\begin{align}
	z_4&=x_1,\quad
	z_3=x_2-\frac{x_1^2}{2},\quad
	z_2=x_3-\frac{x_1^2}{2}-x_1x_2,\notag\\
	z_1&=x_4-x_1x_2-x_2^2-x_1x_3,\label{eq:ex-OCF}
\end{align}
and the output injection functions are $\alpha_1(y)=y^3$, $\alpha_2(y)=0$, $\alpha_3(y)=y^2/2$, $\alpha_4(y)=y^2/2$. The observer is
\begin{equation}\label{eq:ex-observer}
	\dot{\hat{z}}=A\hat{z}+\alpha(y)+L(y-\hat{z}_4),\qquad C=[0,0,0,1],
\end{equation}
with error dynamics $\dot{\hat{e}}=(A-LC)\hat{e}$ converging asymptotically. The transformation~(\ref{eq:ex-OCF}) has an upper-triangular structure; its inverse can be obtained row by row via back-substitution to recover $\hat{x}$ from $\hat{z}$. From~(\ref{eq:ex-ss1})--(\ref{eq:ex-ss2}), the system is already in observer form, hence $\hat{\chi}=\hat{x}$.

The above comparison shows that, for the same system, every step of the Frobenius integral method---$\tau_k$ recursion, PDE solving, inversion---is tightly coupled to the concrete forms of $f_p$ and $p$ for that system; changing $p(x)$ requires re-running the entire procedure, and the PDE-solving step lacks a universal formula. In contrast, the Pascal-Hessian method unifies the condition verification as a binomial-coefficient check of Hessian entries; the $B_r(\cdot)$ are obtained at once via definite integral formulas; and the OCF coordinate transformation together with state recovery is executed following a fixed procedure, independent of the concrete functional form of $p(x)$.

\subsection{Multi-Output Physical Example: Spacecraft Relative Motion System}\label{sec5.2}

This subsection uses a spacecraft relative motion system as an example to verify the effectiveness of the multi-output Pascal-Hessian condition in Section~\ref{sec4}. Consider a spacecraft (without control input) moving on a circular reference orbit. Its relative motion dynamics in the Reference Orbit Coordinate System (ROCS) can be described as
\begin{equation}
	\dot{\rho}=v,\qquad 
	\dot{v}=-\frac{1}{m}\mathbf{C}(\omega_n)v-\frac{1}{m}\mathbf{N}(\rho,\omega_n,R_e),
\end{equation}
where $\rho=\mathrm{col}\{\rho_1,\rho_2,\rho_3\}\in\mathbb{R}^3$ and $v=\mathrm{col}\{v_1,v_2,v_3\}\in\mathbb{R}^3$ are, respectively, the position and velocity of the spacecraft relative to the origin of the reference orbit coordinate system, $m$ is the spacecraft mass, $\omega_n=\sqrt{GM/R_e^3}$ is the reference orbit angular velocity, and
\begin{align}
	\mathbf{C}(\omega_n)&=2\omega_n\begin{bmatrix}
		0&-1&0\\
		1&0&0\\
		0&0&0
	\end{bmatrix},\\
	\mathbf{N}(\rho,\omega_n,R_e)&=\begin{bmatrix}
		GM\left(\dfrac{\rho_1+R_e}{r^3}-\dfrac{1}{R_e^2}\right)-\omega_n^2\rho_1\\[6pt]
		GM\dfrac{\rho_2}{r^3}-\omega_n^2\rho_2\\[6pt]
		GM\dfrac{\rho_3}{r^3}
	\end{bmatrix},
\end{align}
with $r=\left((R_e+\rho_1)^2+\rho_2^2+\rho_3^2\right)^{1/2}$, $G$ the gravitational constant, $M$ the Earth mass, and $R_e$ the reference orbit radius.

Take the state variables $$x=[x_{11},x_{12},x_{21},x_{22},x_{31},x_{32}]^\top=[\rho_1,v_1,\rho_2,v_2,\rho_3,v_3]^\top$$ and the output $y=[y_1,y_2,y_3]^\top=[\rho_1,\rho_2,\rho_3]^\top$. The system is already in observer form ($n=6$, $\omega=3$, $v=2$):
\begin{equation}
	\dot{x}_{k1}=x_{k2},\qquad 
	\dot{x}_{k2}=p_k(x),\quad k=1,2,3,
\end{equation}
where $p_k(x)$ is the $k$-th component of the vector-valued function $p(x)=\mathrm{col}\{p_1(x),p_2(x),p_3(x)\}$, and
\begin{equation}
	p_k(\rho,v)=-\frac{1}{m}\left(\mathbf{C}(\omega_n)v\right)_k-\frac{1}{m}\mathbf{N}_k(\rho,\omega_n,R_e),
\end{equation}
where $\left(\mathbf{C}(\omega_n)v\right)_k$ denotes the $k$-th component of the vector $\mathbf{C}(\omega_n)v$, and $\mathbf{N}_k(\rho,\omega_n,R_e)$ denotes the $k$-th component of the vector $\mathbf{N}(\rho,\omega_n,R_e)$.

Now verify the Pascal-Hessian condition. Note that $\mathbf{N}(\rho,\omega_n,R_e)$ depends only on the position $\rho$, while $\mathbf{C}(\omega_n)v$ is linear in $v$, so its second-order partial derivatives with respect to the velocity components are zero. Hence, for each $k=1,2,3$, all second-order partial derivatives of $p_k(x)$ with respect to the velocity components are zero; the second-order partial derivatives with respect to the position components come from $\mathbf{N}_k(\rho,\omega_n,R_e)$, and under the block coordinate partition, they naturally satisfy the multi-output Pascal-Hessian condition in Definition~\ref{PH:MO}. Therefore, the system can be directly transformed into the multi-output observer canonical form.

By Theorem~\ref{thm:PHm}, compute $B_{k,r}(y)$ on the section $\Sigma=\{v=0\}$. For $k=1,2,3$,
\begin{align*}
	B_{k,0}(y)=&p_k(y,0)=-\frac{1}{m}\mathbf{N}_k(y,\omega_n,R_e),\\
	B_{k,1}(y)=&0.
\end{align*}
Thus, the multi-output observer canonical form corresponding to the system is $\dot{z}=Az+\alpha(y)$, where $\alpha_{k,0}=B_{k,0}(y)$, $\alpha_{k,1}=B_{k,1}(y)=0$, and $A$ is the block shift-down matrix. Introducing an observer gain $L$ such that $A-LC$ is Hurwitz, we obtain an observer with linear error dynamics:
\begin{equation}
	\dot{\hat{z}}=A\hat{z}+\alpha(y)+L(y-C\hat{z}).
\end{equation}

This example demonstrates that the multi-output Pascal-Hessian condition can be effectively applied to a physically motivated spacecraft relative motion system, verifying the engineering applicability of the theoretical results in Section~\ref{sec4}.

\section{Conclusion and Outlook}\label{sec6}

This paper, targeting the classical theory of nonlinear observer error linearization, has proposed the Pascal-Hessian condition and established a complete framework for verification and construction. On the verification side, this condition equivalently converts the intricate Lie bracket involutivity tests of the traditional OCF theory into an algebraic test on the Hessian matrix structure of the system's nonlinear term, reducing the computational complexity from $O(n^4)$ to $O(n^2)$. On the construction side, we provide explicit integral formulas for all output-dependent univariate functions in the canonical form, together with a direct construction method for the observer canonical form, thereby avoiding the solution of PDE systems and nontrivial nonlinear transformations. We further extend these results to multi-output systems with equal observability indices, giving the corresponding block Pascal triangle condition. This work reveals the fundamental mathematical connection among the theory of observer error linearization, high-order fully measured systems, and Pascal's triangle. The proposed method is entirely formulaic: every step can be executed according to explicit rules, endowing the verification and design of this theory for high-dimensional systems with a feasible implementation pathway.

Future work will focus on extending the Pascal-Hessian matrix method to the general multi-output case with unequal observability indices, and further considering affine nonlinear systems, reduced-order observer settings, and augmented-state observer settings based on dynamic extension.

\appendix
\section*{\centering\LARGE Appendix}
\addcontentsline{toc}{section}{Appendix}
\makeatletter
\renewcommand{\section}{\@startsection{section}{1}{\z@}%
	{-3.5ex \@plus -1ex \@minus -.2ex}%
	{2.3ex \@plus.2ex}%
	{\normalfont\large\bfseries}}
\makeatother

\section{The Complete Computational Procedure of the Traditional OCF Method}\label{app1}

Lemma~\ref{lem:ocf} in the main text has given the necessary and sufficient condition for system~(\ref{sys11})--(\ref{sys12}) to be transformable into the observer canonical form~(\ref{sys2}). This section, under the premise that the necessary and sufficient condition is satisfied, presents two classical methods for constructing the OCF coordinate transformation $z=\Phi(\chi)$ and the output injection functions $\alpha_i(y)$.


By the Frobenius theorem, the necessary and sufficient condition $[\tau_i,\tau_j]=0,~i,j=1,\ldots,n$, for observer error linearization is equivalent to the PDE
\begin{align}
	\frac{\partial \mathcal{F}}{\partial z^\top} = [\tau_1, \ldots, \tau_n]\vert_{\chi=\mathcal{F}(z)}.
\end{align}
Solving this PDE yields the nonlinear diffeomorphism $\chi=\mathcal{F}(z)$. Further, by inverting to obtain $z=\Phi(\chi)$ (the inverse always exists because $\chi=\mathcal{F}(z)$ is a diffeomorphism), the transformed system is
\begin{align}
	\dot{z}=\frac{\partial \Phi(\chi)}{\partial \chi^\top}f(\chi)\big|_{\chi=\mathcal{F}(z)},
\end{align}
which possesses the form of the OCF standard form~(\ref{sys2}).

\section{Equivalence Between the Pascal-Hessian Condition and $[\tau_i,\tau_j]=0$}\label{app2}

This appendix serves as the proof basis for the necessity part of Theorem~\ref{thm:gene} in Section~\ref{sec3.3} of the main text. Specifically, we prove that the Pascal-Hessian condition given by Theorem~\ref{thm:PH} is equivalent to the Lie bracket involutivity condition $[\tau_i,\tau_j]=0$ of the classical OCF theories.

%

The following lemma is a direct corollary of the Frobenius theorem, known in differential geometry as the Commuting Frame Rectification Theorem \cite{Lee2012SmoothManifolds}.

\begin{lemma}[Commuting Frame Rectification Theorem]\label{lem:straightening}
	Let $\tau_1,\ldots,\tau_n$ be smooth vector fields on an $n$-dimensional smooth manifold that are everywhere linearly independent and pairwise commuting (i.e., $[\tau_i,\tau_j]=0$, $\forall i,j$). Then there exists a local coordinate chart $s=(s_1,\ldots,s_n)$ such that $\tau_i=\frac{\partial}{\partial s_i}$ for each $i=1,\ldots,n$.
\end{lemma}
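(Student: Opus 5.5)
The plan is the classical construction by composing flows. Fix a point $\chi_0$ of the manifold and let $\varphi^{i}_t$ denote the local flow of $\tau_i$. For $s=(s_1,\ldots,s_n)$ in a small neighbourhood $V$ of $0\in\mathbb{R}^n$, define
\begin{align*}
\Theta(s)=\varphi^{1}_{s_1}\circ\varphi^{2}_{s_2}\circ\cdots\circ\varphi^{n}_{s_n}(\chi_0).
\end{align*}
By smooth dependence of flows on time and initial data, $V$ can be shrunk so that all compositions are defined and $\Theta$ is smooth. The goal is to show two things: $\Theta$ is a local diffeomorphism near $0$, and $\partial\Theta/\partial s_i=\tau_i\circ\Theta$ for every $i$. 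The desired chart is then $s=\Theta^{-1}$: in these coordinates the coordinate field $\partial/\partial s_i$ is pushed forward by $\Theta$ to $\tau_i$, which is exactly $\tau_i=\partial/\partial s_i$.

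The key intermediate step, and the one I expect to be the main obstacle, is that $[\tau_i,\tau_j]=0$ implies the flows commute locally: $\varphi^{i}_t\circ\varphi^{j}_r=\varphi^{j}_r\circ\varphi^{i}_t$ wherever both sides are defined. I would proceed as follows.
\begin{enumerate}
\item Fix a point $\chi$ and consider the curve $W(t)=\bigl(\mathrm{d}\varphi^{j}_{-t}\bigr)\,\tau_i\bigl(\varphi^{j}_t(\chi)\bigr)$ in the tangent space at $\chi$.
\item Differentiate in $t$. Using the flow property, $W'(t)$ equals the pullback by $\varphi^{j}_t$ of the Lie derivative $\mathcal{L}_{\tau_j}\tau_i=[\tau_j,\tau_i]$, evaluated at $\varphi^{j}_t(\chi)$. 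This vanishes by hypothesis.
\item Hence $W(t)\equiv W(0)=\tau_i(\chi)$, so $\tau_i$ is invariant under $\varphi^{j}_t$.
\item Consequently $\varphi^{j}_t$ maps integral curves of $\tau_i$ to integral curves of $\tau_i$. By uniqueness of solutions, $\varphi^{j}_t\circ\varphi^{i}_r=\varphi^{i}_r\circ\varphi^{j}_t$.
\end{enumerate}
Care is needed only about domains. Everything is local, so I would work on a relatively compact neighbourhood of $\chi_0$ where all flows are defined for $|t|<\varepsilon$.

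With commutation in hand, for each $i$ I reorder the composition defining $\Theta$ so that $\varphi^{i}_{s_i}$ is applied last:
\begin{align*}
\Theta(s)=\varphi^{i}_{s_i}\bigl(\varphi^{1}_{s_1}\circ\cdots\circ\widehat{\varphi^{i}_{s_i}}\circ\cdots\circ\varphi^{n}_{s_n}(\chi_0)\bigr),
\end{align*}
where the hat means that factor is omitted. Differentiating in $s_i$ with the other $s_k$ fixed gives $\partial\Theta/\partial s_i(s)=\tau_i(\Theta(s))$ directly from the definition of a flow. At $s=0$ this says the Jacobian of $\Theta$ has columns $\tau_1(\chi_0),\ldots,\tau_n(\chi_0)$, which are linearly independent by assumption. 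The inverse function theorem then makes $\Theta$ a diffeomorphism from a neighbourhood of $0$ onto a neighbourhood of $\chi_0$. Its inverse $s=\Theta^{-1}$ is the required chart, and the identity $\partial\Theta/\partial s_i=\tau_i\circ\Theta$ is precisely the statement $\tau_i=\partial/\partial s_i$ in these coordinates.
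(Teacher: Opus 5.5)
Your proof is correct. It is the standard commuting-flows argument: the vanishing brackets make $\tau_i$ invariant under the flow of $\tau_j$, so the flows commute. The map $\Theta(s)=\varphi^{1}_{s_1}\circ\cdots\circ\varphi^{n}_{s_n}(\chi_0)$ then satisfies $\partial\Theta/\partial s_i=\tau_i\circ\Theta$, and the inverse function theorem finishes the proof. This is essentially how the textbook the paper cites proves the canonical form theorem for commuting vector fields, so it matches the paper's approach. The paper itself gives no proof and simply labels the lemma a corollary of the Frobenius theorem, whereas your construction is self-contained and does not use Frobenius.
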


\begin{theorem}\label{thm:Lie_PH_equiv}
	In the observer form~(\ref{eq:chain}), $[\tau_i,\tau_j]=0$ for all $i,j=1,\ldots,n$ is equivalent to $p(x)$ satisfying the Pascal-Hessian condition.
\end{theorem}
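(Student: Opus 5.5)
We prove the two implications separately, using Theorem~\ref{thm:PH} as the bridge: by that theorem, the Pascal-Hessian condition is equivalent to $p(x)=\sum_{r=0}^{n-1}D^rB_r(x_1)$. So it suffices to show that $[\tau_i,\tau_j]=0$ for all $i,j$ is equivalent to the fully measured form~(\ref{fms}).

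\emph{Pascal-Hessian $\Rightarrow$ commuting frame.} Assume the Pascal-Hessian condition. Theorem~\ref{thm:PH} gives the $B_r$, and (\ref{transform1})--(\ref{transform2}) give a global upper-triangular diffeomorphism $z=\Psi'(x)$ carrying (\ref{eq:chain}) into $\dot z=Az+\alpha(z_n)$, $y=z_n$. The vector fields $\tau_i$ are defined intrinsically: $\tau_1$ by (\ref{ocf}) and $\tau_k=[\tau_{k-1},f]$. Hence they transform covariantly and can be computed in $z$-coordinates. There $h=z_n$ and $L_f^rh=z_{n-r}+(\text{function of }z_{n-r+1},\dots,z_n)$, so $\tau_1=\partial/\partial z_1$. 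Since $\partial f/\partial z_k=e_{k+1}$ for $k<n$, induction gives $\tau_k=[\partial_{z_{k-1}},f]=\partial_{z_k}$ for $k\le n$; note $\tau_k$ has no $z_n$-component when $k<n$, so $\alpha$ never enters. Coordinate vector fields commute, which gives this direction. Alternatively, one can invoke Lemma~\ref{lem:ocf} directly, since the OCF exists.

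\emph{Commuting frame $\Rightarrow$ Pascal-Hessian.} Assume $[\tau_i,\tau_j]=0$. Observability in form~(\ref{eq:chain}) is automatic, since $L_f^{r}h=x_{r+1}$. By Lemma~\ref{lem:ocf}, or directly by Lemma~\ref{lem:straightening} (the $\tau_i$ are triangular with unit anti-diagonal, hence independent), there are coordinates $s$ with $\tau_i=\partial/\partial s_i$. We show that in these coordinates $f=As+\alpha(y)$. First, $[\tau_i,f]=\tau_{i+1}$ for $i<n$ means $\partial f/\partial s_i=e_{i+1}$. Second, one checks that $y=x_1$ satisfies $L_{\tau_i}y=0$ for $i<n$ and $L_{\tau_n}y=\pm1$, so $y$ depends on $s_n$ alone; normalising gives $y=s_n$ up to an additive constant. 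The remaining derivative $\partial f/\partial s_n$ is then forced to depend only on $s_n$: commutation of $[\tau_n,f]$ with the $\tau_i$ via the Jacobi identity gives $[\tau_i,[\tau_n,f]]=[\tau_n,\tau_{i+1}]=0$. Integrating yields $f=As+\alpha(s_n)$ with the same shift matrix $A$. This is the OCF, and Duan's derivation inverts it: eliminating $s_1,\dots,s_{n-1}$ gives $y^{(n)}=\sum_r \frac{d^r}{dt^r}\alpha_{r+1}(y)$, i.e.\ $p=\sum D^rB_r(x_1)$. The necessity half of Theorem~\ref{thm:PH} then yields the Pascal-Hessian condition.

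The main obstacle is the second direction: making rigorous that in the rectified coordinates $y$ is a function of $s_n$ only, and that the last column of the Jacobian of $f$ depends only on $s_n$. For $y$, we first verify $L_{\tau_i}L_f^{r}h=\delta$-type identities by induction, using $L_{[\tau,f]}=L_\tau L_f-L_fL_\tau$ and (\ref{ocf}). If this route proves awkward, a fallback is a direct computation in $x$-coordinates: write $\tau_1=e_n$, and note that the $n$-th component of $\tau_k$ involves derivatives $\partial_n^{a}\partial_{\cdot}p$. Matching $[\tau_1,\tau_k]=0$ order by order then reproduces the binomial relations $\partial_i\partial_jp=\binom{i+j-2}{i-1}\partial_1\partial_{i+j-1}p$, with Lemma~\ref{lem-D} supplying the combinatorics.
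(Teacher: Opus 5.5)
Your proposal is correct and follows essentially the same route as the paper. Both directions pass through the fully measured form via Theorem~\ref{thm:PH}. The forward direction shows $\tau_i=\partial/\partial z_i$ in the coordinates (\ref{transform1})--(\ref{transform2}). The converse rectifies the commuting frame, identifies $x_1=s_n$, integrates $\partial a_j/\partial s_i=\delta_{j,i+1}$ to the OCF, and eliminates back to $p=\sum_r D^rB_r(x_1)$.

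Your additional care fills in steps the paper only asserts. This includes deriving $\tau_1=\partial/\partial z_1$ from (\ref{ocf}), proving $L_{\tau_i}x_1=\delta_{i,n}$ by induction, and checking independence of the $\tau_i$ before invoking Lemma~\ref{lem:straightening}. Your Jacobi-identity argument for the $s_n$-dependence of $\partial f/\partial s_n$ is a valid, if redundant, alternative to direct integration.
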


\begin{proof}
	We prove the two directions separately.
	
	\noindent (1) Pascal-Hessian condition $\Rightarrow [\tau_i,\tau_j]=0$.

	By the sufficiency part of Theorem~\ref{thm:PH}, the Pascal-Hessian condition is equivalent to the existence of a family of smooth univariate functions $B_0(x_1),\ldots,B_{n-1}(x_1)$ such that
	\begin{equation}\label{eq:p_sum}
		p(x)=\sum_{r=0}^{n-1}D^rB_r(x_1).
	\end{equation}
	Define the coordinate transformation $x\mapsto z$ as~(\ref{transform1})--(\ref{transform2}); its Jacobian is anti-triangular with unit diagonal, hence a local diffeomorphism.

	Since $\dot{B}(x_1)=DB(x_1)$ for any $B(x_1)$, whence
	\[
	\frac{\mathrm{d}}{\mathrm{d}t}D^{r-k}B_r(x_1)=D^{r-k+1}B_r(x_1).
	\]
	Verifying the dynamics in $z$-coordinates: for $k=1$,
	\begin{align}
		\dot{z}_1=&\dot{x}_n-\sum_{r=1}^{n-1}\frac{\mathrm{d}}{\mathrm{d}t}D^{r-1}B_r(x_1)\notag\\
		=&p-\sum_{r=1}^{n-1}D^rB_r(x_1)=B_0(x_1)=B_0(z_n).
	\end{align}
	For $k\geq2$,
	\[
	\begin{aligned}
		\dot{z}_k&=x_{n-k+2}-\sum_{r=k}^{n-1}D^{r-k+1}B_r(x_1)\\
		&=z_{k-1}+B_{k-1}(z_n).
	\end{aligned}
	\]
	Therefore, the vector field $f_p$ in $z$-coordinates can be expressed as
	\begin{equation}\label{eq:fp_s}
		f_p=B_0(s_n)\frac{\partial}{\partial z_1}+\sum_{k=2}^{n}\left(z_{k-1}+B_{k-1}(z_n)\right)\frac{\partial}{\partial z_k}.
	\end{equation}

	Now compute the $\tau$ vector fields in $z$-coordinates. In $x$-coordinates, $\tau_1=e_n=\partial/\partial x_n$. From the dependence relations of the terms in~(\ref{transform1})--(\ref{transform2}) (Lemma~\ref{lem-1} guarantees that $\sum_{r=k}^{n-1}D^{r-k}B_r(x_1)$ depends only on $x_1,\ldots,x_{n-1}$), we obtain
	\[
	\tau_1s_1=1,\qquad \tau_1s_k=0\;(k\geq2).
	\]
	Hence, in $s$-coordinates, $\tau_1=\partial/\partial z_1$.

	From~(\ref{eq:fp_s}), direct computation yields
	\[
	\left[\frac{\partial}{\partial z_i},f_p\right]=\frac{\partial}{\partial z_{i+1}},\qquad i=1,\ldots,n-1.
	\]
	Furthermore, since $\tau_{i+1}=[\tau_i,f_p]$ and the Lie bracket is invariant under coordinate transformations, induction yields
	\[
	\tau_i=\frac{\partial}{\partial z_i},\qquad i=1,\ldots,n.
	\]
	Therefore, for any $i,j$,
	\[
	[\tau_i,\tau_j]=\left[\frac{\partial}{\partial z_i},\frac{\partial}{\partial z_j}\right]=0.
	\]
	The forward direction is proved.

	\medskip
	\noindent (2) $[\tau_i,\tau_j]=0\Rightarrow$ Pascal-Hessian condition.

	Assume $[\tau_i,\tau_j]=0$ for all $i,j$. By Lemma~\ref{lem:straightening}, there exist local coordinates $s=(s_1,\ldots,s_n)$ such that
	\begin{equation}\label{eq:tau_s}
		\tau_i=\frac{\partial}{\partial s_i},\qquad i=1,\ldots,n.
	\end{equation}
	From $\tau_i x_1=\partial x_1/\partial s_i$ together with $\tau_n x_1=1$ and $\tau_i x_1=0$ ($i<n$), we obtain $\partial x_1/\partial s_i=\delta_{in}$. By a suitable additive constant shift, we may take
	\begin{equation}\label{eq:sn_x1}
		s_n=x_1.
	\end{equation}

	In $s$-coordinates, express $f_p$ as
	\[
	f_p=\sum_{j=1}^n a_j(s)\frac{\partial}{\partial s_j}.
	\]
	From the recursion $\tau_{i+1}=[\tau_i,f_p]$ and~(\ref{eq:tau_s}), we obtain
	\[
	\frac{\partial}{\partial s_{i+1}}=\left[\frac{\partial}{\partial s_i},f_p\right]=\sum_{j=1}^n\frac{\partial a_j}{\partial s_i}\frac{\partial}{\partial s_j},\qquad i=1,\ldots,n-1.
	\]
	Comparing coefficients gives
	\begin{equation}\label{eq:da}
		\frac{\partial a_j}{\partial s_i}=\delta_{j,i+1},\qquad i=1,\ldots,n-1.
	\end{equation}
	Integrating~(\ref{eq:da}) successively: for $j=1$, $\partial a_1/\partial s_i=0$ ($\forall i\leq n-1$), hence $a_1=B_0(s_n)$; for $j=2$, $\partial a_2/\partial s_1=1$, $\partial a_2/\partial s_i=0$ ($i\geq2$), and integrating gives $a_2=s_1+B_1(s_n)$. Proceeding analogously, we obtain
	\begin{align}
		a_1=&B_0(s_n),\\
		a_j=&s_{j-1}+B_{j-1}(s_n),\quad j=2,\ldots,n,\label{eq:a_j}
	\end{align}
	where $B_0(s_n),\ldots,B_{n-1}(s_n)$ are smooth functions depending only on $s_n=x_1$.

	Now return from $s$-coordinates to $x$-coordinates. In the observer form~(\ref{eq:chain}), we have $x_{m+1}=L_{f_p}^m x_1$ ($m=0,\ldots,n-1$). Back-substituting step by step from~(\ref{eq:a_j}): first $x_1=s_n$. From $\dot{s}_n=s_{n-1}+B_{n-1}(x_1)=\dot{x}_1=x_2$, we obtain
	\[
	x_2=s_{n-1}+B_{n-1}(x_1).
	\]
	Continuing recursively, generally we have
	\begin{equation}\label{eq:x2s_inv}
		x_{n-k+1}=s_k+\sum_{r=k}^{n-1}L_{f_p}^{r-k}B_r(x_1),\qquad k=1,\ldots,n.
	\end{equation}
	Taking $k=1$,
	\[
	x_n=s_1+\sum_{r=1}^{n-1}L_{f_p}^{r-1}B_r(x_1).
	\]
	Applying $L_{f_p}$ to the above equality and using $\dot{s}_1=a_1=B_0(x_1)$,
	\begin{equation}\label{eq:p_Lf}
		p(x)=L_{f_p}x_n=B_0(x_1)+\sum_{r=1}^{n-1}L_{f_p}^r B_r(x_1).
	\end{equation}

	Finally, we prove that for any univariate function $B(x_1)$ and $r\leq n-1$,
	\begin{equation}\label{eq:Lf_D}
		L_{f_p}^rB(x_1)=D^rB(x_1).
	\end{equation}
	The case $r=0$ is obvious. Assume $L_{f_p}^rB(x_1)=D^rB(x_1)$ depends only on $x_1,\ldots,x_{r+1}$ (by Lemma~\ref{lem-1}). If $r\leq n-2$, then $r+1\leq n-1$, so $\partial_n\left(L_{f_p}^rB(x_1)\right)=0$. From $f_p=D+p\partial_n$, we obtain
	
	\begin{align}
		L_{f_p}^{r+1}B(x_1)&=(D+p\partial_n)\left(L_{f_p}^rB(x_1)\right)\notag\\
		&=D\left(D^rB(x_1)\right)+p\cdot0=D^{r+1}B(x_1).
	\end{align}
	Induction up to $r=n-2$ already covers all cases with $r\leq n-1$, hence~(\ref{eq:Lf_D}) holds.

	Substituting~(\ref{eq:Lf_D}) into~(\ref{eq:p_Lf}) gives
	\[
	p(x)=\sum_{r=0}^{n-1}D^rB_r(x_1).
	\]
	By Theorem~\ref{thm:PH}, the above is equivalent to the Pascal-Hessian condition. The backward direction is proved.

	In summary, Theorem~\ref{thm:Lie_PH_equiv} is established, i.e., the Pascal-Hessian condition and the Lie bracket involutivity condition $[\tau_i,\tau_j]=0$ are fully equivalent. This result demonstrates that the algebraic criterion proposed in this paper is in complete theoretical accord with the traditional differential geometric approach.
\end{proof}

\section{Multi-Output Observer Canonical Form}\label{app3}

This appendix gives the concrete form of the observer canonical form corresponding to the multi-output systems in Section~\ref{sec4}. Consider a nonlinear system with $\omega$-dimensional output and state dimension $n=\omega v$. The multi-output observer canonical form has the following block structure:
\begin{equation}
	\dot{z} = A z + \alpha(y), \qquad y = C z,
\end{equation}
where $z=\mathrm{col}\{z_1,\ldots,z_\omega\}\in\mathbb{R}^n$, each sub-block $z_i\in\mathbb{R}^{v}$, $y=\mathrm{col}\{y_1,\ldots,y_\omega\}\in\mathbb{R}^\omega$, $\alpha(y)=\mathrm{col}\{\alpha_1(y),\ldots,\alpha_\omega(y)\}$ is a smooth vector-valued function, and
\[
A = I_\omega\otimes A_0,
\qquad
C = I_\omega\otimes C_0,
\]
where $\otimes$ denotes the Kronecker product, and each diagonal block $A_0\in\mathbb{R}^{v\times v}$ and $C_0\in\mathbb{R}^{1\times v}$ are given by
\begin{equation}
	A_0 = \begin{bmatrix}
		0 & \cdots & 0 &0\\
		1 & \cdots & 0 &0\\
		\vdots & \ddots & \vdots & \vdots \\
		0 & \cdots & 1 &0
	\end{bmatrix},\qquad
	C_0 = \begin{bmatrix} 0 & \cdots & 0 & 1 \end{bmatrix}.
\end{equation}

For the multi-output nonlinear system~(\ref{sys11}),~(\ref{sys13}), the necessary and sufficient condition for it to be transformable into the observer canonical form on a region $\mathcal{U}$ is that the codistribution spanned by~(\ref{mspan}) has dimension $n$ on $\mathcal{U}$, and that the involutivity condition $[\tau_{ik},\tau_{jk}]=0$ holds for all $i,j=1,\ldots,\omega$ and $k=1,\ldots,v$, where $\tau_{i1}$ is the solution of the linear system $\langle {\rm d}L_f^{\ell}h_i(\chi),\tau_{i1}\rangle=\delta_{\ell,v-1}$, and $\tau_{i\ell}=[\tau_{i(\ell-1)},f]$.

\bibliographystyle{unsrt}
\bibliography{reference}

\vfill

\end{document}